\theoremstyle{plain}
 \newtheorem{thm}{Theorem}[section]
 \newtheorem{cor}[thm]{Corollary}
 \newtheorem{lem}[thm]{Lemma}
 \theoremstyle{definition}
 \newtheorem{defn}[thm]{Definition}
 \theoremstyle{remark}
 \newtheorem{rem}[thm]{Remark}
 \numberwithin{equation}{section}
\begin{document}

\title[]
{Yamabe flow and ADM Mass on asymptotically flat manifolds}

\author{Liang Cheng, Anqiang Zhu}

\address{Liang Cheng, School of Mathematics and Statistics, Huazhong Normal University,
Wuhan, 430079, P.R. CHINA}

\email{math.chengliang@gmail.com}

\address{Anqiang Zhu, School of Mathematics and Statistics, Wuhan University,
Wuhan, 430072, P.R. CHINA}

\email{anqiangzhu@yahoo.com.cn}
\date{}

\begin{abstract}
In this paper, we investigate the behavior of ADM mass and Einstein-Hilbert functional under the Yamabe flow. Through studying the Yamabe flow by weighted spaces for parabolic operators, we show that the asymptotically flat property is preserved under the Yamabe flow.
We also obtain that ADM mass is invariant under the Yamabe flow and Yamabe flow is the gradient flow of Einstein-Hilbert functional on $n$-dimensional, $n\geq 3$, asymptotically flat manifolds with order $\tau>\frac{n-2}{2}$ for $n=3,4$ or $\tau>n-3$ for $n>4$. Moreover, we show that
ADM mass and Einstein-Hilbert functional are non-increasing under the Yamabe flow on $n$-dimensional asymptotically flat manifolds if we only assume the order $\tau>\frac{n-2}{2}$ for $n>4$.

{ \textbf{Mathematics Subject Classification} (2000): 
53C21,51P05}

{ \textbf{Keywords}:Yamabe flow; ADM mass; Einstein-Hilbert functional; Weighted spaces; Asymptotically flat manifolds  }
\end{abstract}

\thanks{}
 \maketitle

\section{Introduction}

In 1960s, R.Arnowitt, S.Deser, C.Misner (\cite{ADM1},\cite{ADM2},\cite{ADM3}) made a detailed study of isolated gravitational systems and defined the total mass (the ADM mass) of the gravitational system. They also proposed the famous positive mass conjecture for the space-like asymptotically
flat hypersurface of 4-Lorentz manifold which was first solved by R.Schoen and S.T.Yau (\cite{SY}, \cite{SY2}) using minimal surface and shortly thereafter by E.Witten \cite{W} using spinors.
The ADM mass of $n$-dimensional Riemannian manifolds is defined as
\begin{align}\label{ADM_Mass}
m(g)=\lim\limits_{r\to\infty}\frac{1}{4\omega}\int_{S_r}(\partial_j g_{ij}-\partial_ig_{jj})dS^i,
\end{align}
where $\omega$ denotes the volume of unit $n-1$-sphere in $\mathbb{R}^n$, $S_r$ denotes the Euclidean sphere with radius $r$ and $dS^i$ is the normal surface area element to $S_r$ with respect to Euclidean metric.
The Riemannian version of positive mass conjecture states that if $(M^n,g)$ is an $n$-dimensional asymptotically flat manifold $M^n$ of order $\tau>\frac{n-2}{2}$ and the scalar curvature is nonnegative and integrable, then $m(g)\geq 0$ with equality holds if and only if $M^n$ is isometric to $\mathbb{R}^n$. R.Schoen \cite{S} gave a proof of his work with S.T.Yau that the Riemannian version of positive mass conjecture for the dimension$\leq 7$ by using minimizing hypersurface. In \cite{B}, R.Bartnik  proved Riemannian version of positive mass conjecture
for $n$-dimensional spin manifolds following E.Witten's methods in \cite{W}. For the recent progress of this topic, one may see  \cite{Ei}, \cite{EHLS}, \cite{Lo}.

The Yamabe flow is defined by the
evolution equation
\begin{equation}\label{yamabe_flow_curvature}
\left\{
\begin{array}{ll}
         \frac{\partial g}{\partial t}=-Rg \quad &\text{in}\ M^n\times[0,T),\\
                  g(\cdot,0)=g_0 &\text{in}\ M^n,
\end{array}
\right.
\end{equation}
on an $n$-dimensional complete Riemannian manifold $(M^n,g_0)$, $n\geq 3$, where $g(t)$ is a family
of Riemannian metrics on the manifold $M^n$ and $R$ is the
scalar curvature of the metric
$$
g:=g(t)=u^{\frac{4}{n-2}}g_0,
$$
where $n\geq 3$ and
$u:M^n\to \mathbb{R}^+$ is a positive smooth function on $M^n$.
In the sequence of changing time by a constant scale,
(\ref{yamabe_flow_curvature}) can be written in the equivalent form
\begin{equation}\label{yamabe_flow_u}
\left\{
\begin{array}{ll}
         \frac{\partial u^N}{\partial t}=L_{g_0}u, \quad &\text{in}\ M^n\times[0,T),\\
                  u(\cdot,0)=1, &\text{in}\ M^n,
\end{array}
\right.
\end{equation}
where $N=\frac{n+2}{n-2}$, $L_{g_0}u=\Delta_{g_0}u-aR_{g_0}u$ and
$a=\frac{n-2}{4(n-1)}$.

The Yamabe flow was proposed by
R.Hamilton \cite{H89} in the 1980's as a tool for constructing
metrics of constant scalar curvature in a given conformal class. Consider the normalized Yamabe flow
\begin{equation}\label{rescaled_Yamabe_flow_1}
\left\{
\begin{array}{ll}
         \frac{\partial g}{\partial t}=(s-R)g \quad &\text{in}\ M^n\times[0,T),\\
                  g(\cdot,0)=g_0 &\text{in}\ M^n,
\end{array}
\right.
\end{equation}
on closed manifolds, where $s$ is the mean value of $r$, i.e. $s=\frac{\int_{M}R dvol}{vol(M)}$. R.Hamilton proved that the normalized Yamabe flow (\ref{rescaled_Yamabe_flow_1}) has a global solution for every initial metric. If the solution $g(t)$ of the Yamabe flow (\ref{rescaled_Yamabe_flow_1}) converges smoothly to a metric with constant scalar curvature as $t\to\infty$. Then it would give another approach to Yamabe problem solved by R.Schoen \cite{S1984}. This problem has been studied by several authors,
including  S.Brendle \cite{BS2005}\cite{BS2007}, B.Chow \cite{chow}, R. Ye \cite{Y94}, H. Schwetlick, M. Struwe \cite{SS}.

It is well-known that the ADM mass and positive mass theorem are closed related to the Yamabe problem. In fact, the positive mass theorem plays the key role in R.Schoen's proof of Yamabe problem. Notice that Yamabe flow is the parabolic analogue of the nonlinear elliptic equation in Yamabe problem. So a natural question is that what is the relationship between ADM mass and Yamabe flow? Also notice
that the Yamabe flow (\ref{yamabe_flow_curvature}) is the gradient flow of Einstein-Hilbert functional $\int_{M}Rd\text{vol}$, which is an important quantity in general relativity,
on closed manifolds in a fixed conformal class.  Therefore, it is natural to ask what is the behavior of Einstein-Hilbert functional under the Yamabe flow (\ref{yamabe_flow_curvature}) on asymptotically flat manifolds?

We remark that geometric flows are one of the powerful tools to study the masses in general relativity and related topics. For example, Huisken and Ilmanen \cite{HI} proved the Riemannian-Penrose conjecture in single back hole case by using inverse mean curvature flow, in which Hawking mass is monotone increasing.
 And H.L.Bray \cite{Br} proved the Riemannian-Penrose conjecture  by using his conformal flow, in which ADM mass is monotone non-increasing.
 Hence it is interested for the geometrists and physicists to find some intrinsic geometric flows such that the masses in general relativity are monotone or invariant under these flows.
An interesting work of X.Dai and L.Ma \cite{DM} showed that
ADM mass is invariant and well-defined on a 3-dimensional asymptotically flat manifold of order $\tau>\frac{1}{2}$ with nonnegative and integrable scalar curvature. They also obtained the similar results hold for higher dimensional asymptotically manifolds with more assumptions. For other works for studying the mass under the Ricci flow, one may see \cite{BW}, \cite{Li} and \cite{OW}.

Before presenting the main theorems of this paper,  we need the following two definitions.
\begin{defn}\label{AE_def}
A Riemannian manifold $M^n$, $n\geq 3$, with $C^{\infty}$ metric $g$ is called asymptotically flat of order $\tau$
if there exists a decomposition $M^n= M_0\cup M_{\infty}$
(for simplicity we deal only with the case of one end and the case of multiple
ends can be dealt with similarly) with $M_0$ compact and a diffeomorphism $M_{\infty}\cong \mathbb{R}^n-B(o,R_0)$ for some constant $R_0 > 0$ satisfying
\begin{align}\label{AE}
g_{ij} -\delta_{ij}\in C^{2+\alpha}_{-\tau}(M)
\end{align}
(defined in Definition \ref{elliptic_wss}) in the coordinates $\{x^i\}$ induced on $M_{\infty}$. And the coordinates $\{x^i\}$ are called asymptotic coordinates.
\end{defn}

\begin{defn}
We say that $u(x,t)$ is a fine solution of Yamabe flow, $0\leq t<t_{max}$, on a complete manifold $(M^n,g_0)$ if
$0<\delta\leq|u(x,t)|\leq C$ for $0\leq t\leq T$,
$\sup\limits_{M^n\times [0,T]}|\nabla_{g_0} u(x,t)|\leq C$ and $\sup\limits_{M^n\times [0,T]}|Rm(g)|(x,t)\leq C$ for any $T<t_{max}$, such that either $\lim\limits_{t\to t_{max}}\sup\limits_{M}|Rm|(\cdot,t)=\infty$ for $t_{max}<\infty$ or $t_{max}=\infty$, where $Rm(g)$ is the Riemannian
curvature of the metric
$g:=g(t)=u^{4/(n-2)}g_0$.
\end{defn}
The short time existence of smooth solution to Yamabe flow on noncompact manifold with bounded scalar curvature was obtained by B.L.Chen, X.P.Zhu \cite{CZ} and Y.An, L.Ma \cite{AM}. Based on their work, we shall show the fine solution to Yamabe flow always exists on asymptotically flat manifolds (see Corollary \ref{local_existence}).

The following theorem, which is the first result of the paper, shows that the Yamabe flow preserves the asymptotically flat condition (\ref{AE}).

\begin{thm}\label{A}
Let $u(x,t)$, $0\leq t<t_{max}$, be the fine solution to the Yamabe flow (\ref{yamabe_flow_u}) on an $n$-dimensional asymptotically flat manifold $(M^n,g_0)$ of any order $\tau>\frac{n-2}{2}$. Set $g(t)=u^{\frac{4}{n-2}}g_0$ and $v=1-u$. Then $v(x,t)\in C^{2+\alpha}_{-\tau}(M)$ and $g_{ij}(x,t)-\delta_{ij}\in C^{2+\alpha}_{-\tau}(M)$ for $t\in [0,t_{max})$.
\end{thm}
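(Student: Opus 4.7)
The plan is to set $v = 1 - u$, which turns the initial condition into $v(\cdot,0) \equiv 0$, and then to show that $v(\cdot,t)$ inherits the decay rate $-\tau$ from the scalar curvature of $g_0$ by invoking the weighted parabolic Schauder theory for asymptotically flat manifolds alluded to in the abstract.

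First I would derive the PDE for $v$. Substituting $u = 1-v$ into (\ref{yamabe_flow_u}) and using $\partial_t u^N = -N(1-v)^{N-1}\partial_t v$ gives
\begin{equation*}
\partial_t v \;=\; \frac{1}{N(1-v)^{N-1}}\bigl(\Delta_{g_0} v + aR_{g_0}(1-v)\bigr).
\end{equation*}
The fine solution hypothesis forces $u = 1-v$ to be uniformly bounded and bounded away from zero, so the leading coefficient $1/(N(1-v)^{N-1})$ is uniformly elliptic with H\"older-regular dependence, while asymptotic flatness of $g_0$ of order $\tau$ puts $R_{g_0}$ into $C^\alpha_{-\tau-2}(M)$. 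Hence the inhomogeneous term $aR_{g_0}(1-v)/(N(1-v)^{N-1})$ lies in $C^\alpha_{-\tau-2}(M)$ uniformly in $t$.

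Next I would treat the $v$-equation as a linear parabolic equation with zero initial datum and forcing in $C^\alpha_{-\tau-2}$, and apply the weighted parabolic Schauder estimate established earlier in the paper --- the parabolic analogue of the classical Bartnik statement that $\Delta_{g_0}\colon C^{2+\alpha}_{-\tau}(M)\to C^{\alpha}_{-\tau-2}(M)$ is a good map for non-exceptional $\tau$. Writing the equation as $\partial_t v = \mathcal{L}_t v + f_t$ with $\mathcal{L}_t$ a time-dependent elliptic operator asymptotic to $\Delta_{\mathbb{R}^n}$ and $f_t \in C^{\alpha}_{-\tau-2}$ uniformly in $t$, Duhamel's formula $v(\cdot,t) = \int_0^t U(t,s)f_s\,ds$ combined with the weighted parabolic regularity bound yields $v(\cdot,t) \in C^{2+\alpha}_{-\tau}(M)$ for every $t < t_{\max}$. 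The genuinely nonlinear contributions (the $v$-dependence of the coefficients) are of lower order and absorbed by a short-time contraction mapping together with a Gr\"onwall-type continuation, using the bounds on $|\nabla_{g_0}u|$ and $|Rm(g)|$ supplied by the fine solution property.

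The metric conclusion follows by writing
\begin{equation*}
g_{ij}(t) - \delta_{ij} \;=\; \bigl(u^{4/(n-2)} - 1\bigr)g_{0,ij} + (g_{0,ij} - \delta_{ij}),
\end{equation*}
and expanding $u^{4/(n-2)} - 1 = -\tfrac{4}{n-2}v + O(v^2)$ as a smooth function of $v$ vanishing at $v=0$; both summands then lie in $C^{2+\alpha}_{-\tau}(M)$ since $g_{0,ij}-\delta_{ij}$ is there by hypothesis, the weighted H\"older space is closed under products with bounded $C^{2+\alpha}$ tensors, and $v\in C^{2+\alpha}_{-\tau}$ by Step 2. The main obstacle I anticipate is not in this translation step or the algebraic manipulation, but in using the weighted parabolic theory cleanly at the borderline weight $\tau>\tfrac{n-2}{2}$: one must ensure that the propagator $U(t,s)$ of the non-autonomous operator $\partial_t - \mathcal{L}_t$ maps $C^{\alpha}_{-\tau-2}$ into $C^{2+\alpha}_{-\tau}$ with a norm uniform on $[0,T]$, which rests on treating $\mathcal{L}_t - \Delta_{\mathbb{R}^n}$ as a perturbation in appropriate weighted norms and avoiding exceptional indicial exponents. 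Once that linear estimate is available, the theorem follows by the bootstrap sketched above.
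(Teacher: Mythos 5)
Your reduction to the equation $N(1-v)^{N-1}\partial_t v=\Delta_{g_0}v-aR_{g_0}v+aR_{g_0}$ and your final algebraic step recovering $g_{ij}-\delta_{ij}\in C^{2+\alpha}_{-\tau}(M)$ from $v\in C^{2+\alpha}_{-\tau}$ both match the paper. The gap is in the middle, and it is not the one you flag. The paper's weighted parabolic estimates (Theorems \ref{Lp_estimate_II} and \ref{Holder_estimate_II_thm}) are \emph{a priori} inequalities of the form
\begin{equation*}
\|v\|_{C^{2+\alpha,1+\alpha/2}_{\beta}(Q_T)}\leq C\bigl(\|R_{g_0}\|_{C^{\alpha,\alpha/2}_{\beta-2}(Q_T)}+\|v\|_{C^{0}_{\beta}(Q_T)}\bigr),
\end{equation*}
with the weighted $C^0_\beta$ norm of $v$ itself on the right-hand side. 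To invoke them at $\beta=-\tau$ you must \emph{already know} that the given fine solution decays like $r^{-\tau}$ pointwise; no such decay is part of the fine-solution hypothesis, which only gives uniform bounds. The paper supplies this missing first step as Theorem \ref{u_decay}: applying the Ecker--Huisken noncompact maximum principle (Theorem \ref{Ecker_Huisken}) to $w=r^{\tau+2}v$, using $R_{g_0}=O(r^{-(\tau+2)})$ as forcing, it proves $v=O(r^{-(\tau+2)})$ directly for the given solution. Only then does the bootstrap $C^0_{-\tau-2}\to L^p_{-\tau+\epsilon}\to W^{2,1,p}_{-\tau+\epsilon}\to \widetilde{C}^{1+\alpha}_{-\tau+\epsilon}\to C^{2+\alpha,1+\alpha/2}_{-\tau}$ run.

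Your substitute for this step --- Duhamel's formula with a propagator $U(t,s)$ mapping $C^{\alpha}_{-\tau-2}$ to $C^{2+\alpha}_{-\tau}$, plus a contraction argument --- has two problems. First, such mapping properties of the solution operator are nowhere established in the paper and are not a routine perturbation statement: the paper's scaling arguments on dyadic annuli yield only the a priori estimates above, not solvability or propagator bounds in weighted spaces on a noncompact manifold with time-dependent coefficients; asserting them is essentially assuming the hardest part. Second, and more decisively, even granting those bounds, your contraction produces \emph{some} solution lying in the weighted space with zero initial data; you must still show it coincides with the given fine solution $1-u$. Uniqueness of parabolic solutions on a noncompact manifold is false without growth restrictions, so this identification itself requires an Ecker--Huisken-type maximum principle --- exactly the ingredient your proposal omits. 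In short: without a pointwise decay statement (or a noncompact uniqueness theorem) proved for the actual solution at hand, the weighted Schauder machinery has nothing to act on, and your bootstrap cannot start.
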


The second result of the paper is the following

\begin{thm}\label{main_5}
Let $u(x,t)$, $0\leq t< t_{max}$, be the fine solution to the Yamabe flow (\ref{yamabe_flow_u}) on an $n$-dimensional asymptotically flat manifold $(M^n,g_0)$ of order $\tau>\frac{n-2}{2}$. Assume that $R_{g_0}\geq 0$ and $R_{g_0}\in L^{1}(M)$, where $R_{g_0}$ is the scalar curvature of $g_0$.
Set $g(t)=u^\frac{4}{n-2}g_0$.

(i) In case of the dimension $n=3$ or $4$, ADM mass $m(g(t))$ is well-defined under the Yamabe flow (\ref{yamabe_flow_u}) for $0\leq t<t_{max}$(i.e. ADM mass is independent of the choices of the coordinates). Moreover, $m(g(t))\equiv m(g_0)$ and the Yamabe flow is the gradient flow of the Einstein-Hilbert functional $\int_{M}R dvol_{g(t)}$.

(ii)  In case of higher dimensions ($n>4$), the conclusions of (i) still hold if we assume the order $\tau>n-3$.

\end{thm}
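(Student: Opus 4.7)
\medskip

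The plan is to combine Theorem \ref{A}, which supplies the decay rates $v,\; g_{ij}(t)-\delta_{ij}\in C^{2+\alpha}_{-\tau}$ preserved along the flow, with direct boundary-integral computations at infinity. First I would establish the well-definedness of $m(g(t))$: by Bartnik's criterion, for $\tau>\tfrac{n-2}{2}$ the ADM mass is coordinate-invariant as soon as $R_{g(t)}\in L^{1}(M)$. Since $R_{g_0}\ge 0$ and $R_{g_0}\in L^{1}(M)$, I would run the parabolic maximum principle on $\partial_t R=(n-1)\Delta R+R^{2}$ (permissible because the fine-solution hypothesis gives bounded curvature on compact time intervals) to get $R_{g(t)}\ge 0$, and then integrate the evolution identity $\partial_t(R\,dvol_g)=((n-1)\Delta R-\tfrac{n-2}{2}R^{2})\,dvol_g$ over geodesic balls $B_r$. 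After justifying that the flux $(n-1)\int_{\partial B_r}\partial_\nu R$ tends to $0$ as $r\to\infty$ (using the decay of $v$ inherited from Theorem \ref{A} together with interior parabolic regularity, which upgrades spatial derivatives of $R$), the identity gives the monotone bound $\int_M R_{g(t)}\,dvol_{g(t)}\le \int_M R_{g_0}\,dvol_{g_0}$; combined with $R_{g(t)}\ge 0$ this yields $R_{g(t)}\in L^{1}(M)$ for all $t<t_{\max}$.

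Next I would differentiate the mass integral in $t$. Using $\partial_t g_{ij}=-R\,g_{ij}$ and expanding,
\begin{equation*}
\partial_j\dot g_{ij}-\partial_i\dot g_{jj}=(n-1)\partial_iR-R\bigl(\partial_jg_{ij}-\partial_ig_{jj}\bigr)+E_i,
\end{equation*}
where $E_i$ gathers terms of the schematic form $R\cdot\partial(g-\delta)$ and $\partial R\cdot(g-\delta)$. Thus
\begin{equation*}
\tfrac{d}{dt}m(g(t))=\lim_{r\to\infty}\tfrac{1}{4\omega}\int_{S_r}\bigl[(n-1)\partial_iR-R(\partial_jg_{ij}-\partial_ig_{jj})+E_i\bigr]\,dS^{i}.
\end{equation*}
The first piece becomes $\tfrac{n-1}{4\omega}\int_M\Delta R\,dx$ by divergence (after replacing $d\mu_g$ by $dx$ on the sphere at infinity up to lower-order terms), which is shown to vanish by the flux calculation above; the remaining pieces are pointwise estimated by $|R|\,r^{-\tau-1}$ and $|\nabla R|\,r^{-\tau}$, and the surface area is $r^{n-1}$. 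With $|R|\lesssim r^{-\tau-2}$, $|\nabla R|\lesssim r^{-\tau-3}$ (via Schauder bounds applied to $v\in C^{2+\alpha}_{-\tau}$), the surface integrals decay like $r^{n-3-2\tau}$, which tends to $0$ exactly when $\tau>\tfrac{n-2}{2}$ for $n=3,4$ and requires the stronger $\tau>n-3$ in higher dimensions. This yields $m(g(t))\equiv m(g_0)$.

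For the gradient-flow assertion, I would verify the formal computation
\begin{equation*}
\tfrac{d}{dt}\int_M R_{g(t)}\,dvol_{g(t)}=-\tfrac{n-2}{2}\int_M R_{g(t)}^{2}\,dvol_{g(t)},
\end{equation*}
using again $\partial_t(R\,dvol_g)=((n-1)\Delta R-\tfrac{n-2}{2}R^{2})dvol_g$ together with the vanishing of $\lim_{r\to\infty}\int_{\partial B_r}\partial_\nu R$. The right-hand side is precisely $\bigl\langle -R\,g,\,\nabla \mathcal{E}(g)\bigr\rangle$ in the conformal $L^{2}$ inner product, so $-Rg=\partial_t g$ realises the gradient direction of the Einstein-Hilbert functional $\mathcal{E}$, as claimed.

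The main obstacle is the rigorous justification of the boundary integrals at infinity. Three items need care: (a) pointwise asymptotic control of $R$ and $\nabla R$ on each time slice, which forces me to bootstrap Theorem \ref{A} via interior Schauder/parabolic estimates in weighted spaces rather than read the decay directly off $v\in C^{2+\alpha}_{-\tau}$; (b) uniform control of $R$ and $\nabla R$ in $t$, required because the mass derivative is taken inside a limit in $r$; and (c) handling the cross terms $E_i$ and $R(\partial g)$ that couple $R$ with the deviation $g-\delta$, which is precisely where the higher-dimensional gap $\tau>n-3$ enters and where the elementary $\tau>\tfrac{n-2}{2}$ is insufficient to kill all surface integrals.
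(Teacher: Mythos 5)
Your overall strategy---combine Theorem \ref{A} with a direct computation of $\frac{d}{dt}m(g(t))$ as boundary integrals at infinity, split off the main flux term $(n-1)\int_{S_r}\partial_i R\,dS^i$ from the cross terms, and read off the gradient-flow property from the variation of the Einstein--Hilbert functional---is the same as the paper's. But there is a genuine gap in the decay rate you assign to $\nabla R$. You claim $|\nabla R|\lesssim r^{-\tau-3}$ ``via Schauder bounds applied to $v\in C^{2+\alpha}_{-\tau}$.'' This does not follow: $C^{2+\alpha}_{-\tau}$ control of $v$ (equivalently of $g_{ij}-\delta_{ij}$) bounds only up to second spatial derivatives of the metric, hence $R=O(r^{-\tau-2})$, but says nothing about third derivatives; indeed the initial data need not satisfy $|\nabla R_{g_0}|=O(r^{-\tau-3})$ at all (perturbations of the type $r^{-\tau-2\epsilon}\sin(r^{\epsilon})$ lie in $C^{2+\alpha}_{-\tau}$ yet have third derivatives only of size $r^{-\tau-3+\epsilon}$). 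Nor does interior parabolic regularity rescue this: for fixed $t>0$ the effective smoothing scale is $\sqrt{t}$, not $r$, so one only obtains $|\nabla R|\lesssim C(t)\,r^{-\tau-2}$---which is exactly what the paper establishes through its Bernstein--Bando--Shi type estimate (Theorem \ref{gradient_estimates_for_R}), the key lemma your proposal is missing.

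This gap is not cosmetic, because it changes where the dimensional restriction comes from. With the correct rate $|\nabla R|=O(r^{-\tau-2})$, the main flux term satisfies $\int_{S_r}\partial_i R\,dS^i=O(r^{n-3-\tau})$, and it is precisely this term that vanishes for $\tau>\frac{n-2}{2}$ only when $n=3,4$ and that forces the extra hypothesis $\tau>n-3$ when $n>4$. The cross terms you call $E_i$, of type $R\cdot\partial(g-\delta)$ and $\nabla R\cdot(g-\delta)$, decay like $r^{n-3-2\tau}$ or better and hence vanish for all $\tau>\frac{n-2}{2}$ in every dimension, contrary to your item (c). Moreover, with your rate $r^{-\tau-3}$ the flux would be $O(r^{n-4-\tau})$, and mass invariance would follow for $\tau>\frac{n-2}{2}$ in dimensions $5$ and $6$ without assuming $\tau>n-3$---a conclusion stronger than the theorem, which should have signaled the error. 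There is also a circularity to repair: you reduce $\lim_{r\to\infty}\int_{S_r}(n-1)\partial_iR\,dS^i$ to $(n-1)\int_M\Delta R\,dx$ and assert it ``vanishes by the flux calculation above,'' but that flux statement is the very thing needing proof; it must be derived directly from pointwise decay of $\nabla R$, i.e.\ from the Shi-type gradient estimate. Once that estimate is substituted for your unjustified one, your argument closes up and coincides with the paper's proof.
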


In the conclusion (ii) of Theorem \ref{main_5}, we have the extra assumption of the order $\tau>n-3$ for higher dimensions ($n>4$). However, the order $\tau>\frac{n-2}{2}$ is enough for the well-definition of ADM mass (see Theorem \ref{Bartnik}) or the holding of positive mass theorem (conjecture). In the following theorem, we study the behavior of ADM mass and Einstein-Hilbert functional under the Yamabe flow only assuming the  order $\tau>\frac{n-2}{2}$.

\begin{thm}\label{main_1}
Let $u(x,t)$, $0\leq t< t_{max}$, be the fine solution to the Yamabe flow (\ref{yamabe_flow_u}) on an $n$-dimensional asymptotically flat manifold $(M^n,g_0)$ of order $\tau>\frac{n-2}{2}$. Assume that $R_{g_0}\geq 0$ and $R_{g_0}\in L^1(M)$. Set $g(t)=u^{\frac{4}{n-2}}g_0$. Then ADM mass $m(g(t))$ is well-defined and monotone non-increasing under the Yamabe flow (\ref{yamabe_flow_u}) for $0\leq t<t_{max}$.  Moreover, Einstein-Hilbert functional is monotone non-increasing under the Yamabe flow.
More precisely, we have
\begin{align}
\frac{d }{d t}\int_{M}Rdvol_{g(t)}\leq (1-\frac{n}{2})\int_{M}R^2dvol_{g(t)}\leq 0,
\end{align}
for $t\in[0,t_{max})$.
\end{thm}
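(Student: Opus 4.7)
The plan is to combine the asymptotic-flatness preservation of Theorem~\ref{A} with direct time-differentiation of both $\int_M R \, dvol_g$ and $m(g(t))$, reducing both monotonicity statements to the single inequality $\int_M \Delta_g R \, dvol_g \le 0$.

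\emph{Well-definedness.} Theorem~\ref{A} gives that $g(t)$ remains asymptotically flat of order $\tau$ with $v = 1 - u \in C^{2+\alpha}_{-\tau}(M)$, and the parabolic maximum principle applied to $\partial_t R = (n-1)\Delta_g R + R^2$ preserves $R_{g(t)} \ge 0$. To verify $R_{g(t)} \in L^1(M, dvol_{g(t)})$ and invoke Theorem~\ref{Bartnik}, I rewrite via the conformal identity $R_g \, dvol_g = (R_{g_0} u^2 - c_n u\,\Delta_{g_0} u) \, dvol_{g_0}$, where $c_n = 4(n-1)/(n-2)$, and integrate $-\int u \Delta_{g_0} u \, dvol_{g_0}$ by parts. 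The hypothesis $\tau > (n-2)/2$ makes $|\nabla_{g_0} u|^2 = O(r^{-2\tau-2})$ integrable and kills the cross boundary term $\int_{S_r}(u-1)\partial_\nu u \, dA_{g_0}$, while the Yamabe equation $c_n \Delta_{g_0} u = R_{g_0} u - R_g u^{N}$ makes $\lim_{r\to\infty}\int_{S_r}\partial_\nu u \, dA_{g_0}$ finite. Thus $R_{g(t)} \in L^1$ and $m(g(t))$ is well-defined.

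\emph{Derivatives.} Under $\partial_t g = -Rg$, the standard identities $\partial_t R = (n-1)\Delta_g R + R^2$ and $\partial_t(dvol_g) = -\frac{n}{2}R \, dvol_g$ give
\begin{equation*}
\frac{d}{dt}\int_M R \, dvol_g = (n-1)\int_M \Delta_g R \, dvol_g + \Bigl(1 - \frac{n}{2}\Bigr)\int_M R^2 \, dvol_g.
\end{equation*}
For the mass, I would differentiate (\ref{ADM_Mass}) under the integral: using $\partial_t g_{ij} = -R g_{ij}$, the leading contribution of $\partial_t(\partial_j g_{ij} - \partial_i g_{jj})$ is $(n-1)\partial_i R$, with remainder $O(r^{-2\tau - 3})$ that vanishes after integration on $S_r$ under $\tau > (n-2)/2$. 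Stokes' theorem then yields
\begin{equation*}
\frac{d m(g(t))}{dt} = \frac{n-1}{4\omega}\lim_{r \to \infty}\int_{S_r}\partial_\nu R \, dA = \frac{n-1}{4\omega}\int_M \Delta_g R \, dvol_g.
\end{equation*}

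\emph{The main obstacle.} Both conclusions of Theorem~\ref{main_1} thus reduce to $\int_M \Delta_g R \, dvol_g \le 0$. Under the stronger order $\tau > n-3$ of Theorem~\ref{main_5}, the decay $|\nabla R| = O(r^{-\tau-3})$ forces $|\int_{S_r}\partial_\nu R \, dA| = O(r^{n-4-\tau}) \to 0$, yielding equality and mass invariance. Under only $\tau > (n-2)/2$ this direct argument fails once $n \ge 7$, and instead one must extract the sign from $R \ge 0$ combined with $R \in L^1(M, dvol_g)$. A natural route is: by coarea, select radii $r_k \to \infty$ with $\int_{S_{r_k}} R \, dA_g \to 0$; since $R$ is nonnegative and vanishes at infinity, the outward radial derivative of $R$ cannot be positive on average along such spheres, giving $\limsup_k \int_{S_{r_k}} \partial_\nu R \, dA_g \le 0$. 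Alternatively, I would use a Lipschitz cutoff $\eta_r \equiv 1$ on $B_r$ supported in $B_{2r}$, apply Cauchy--Schwarz to $\int \nabla \eta_r \cdot \nabla R \, dvol_g$, and combine with $L^2$ integrability of $\nabla R$ on the annuli. Making this boundary-sign analysis rigorous is the technical heart of the proof; once it is in hand, both monotonicity statements follow immediately from the two identities above.
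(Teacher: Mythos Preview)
Your reduction to the single quantity $\int_M \Delta_g R\,dvol_g$ is correct in spirit, but the step you flag as ``the technical heart'' is precisely where your argument has a genuine gap, and neither of your two heuristics closes it. With only $\tau>\tfrac{n-2}{2}$, Theorem~\ref{gradient_estimates_for_R} gives merely $|\nabla R|=O(r^{-\tau-2})$, so $\int_{S_r}|\partial_\nu R|\,dA=O(r^{n-3-\tau})$, which need not even be bounded for $n\ge 5$; hence your displayed identity for $\tfrac{d}{dt}m(g(t))$ is not justified, and the cutoff/Cauchy--Schwarz estimate you propose yields exactly the same exponent $n-3-\tau$ and so gives nothing. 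The coarea selection of radii with $\int_{S_{r_k}}R\,dA\to0$ is fine, but the inference ``hence $\int_{S_{r_k}}\partial_\nu R\,dA\le0$ on average'' is simply false in general: a nonnegative decaying function can still have $\partial_\nu R>0$ on a sequence of spheres. So you have not shown the inequality, only restated it.

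The paper avoids the boundary behaviour of $R$ altogether by working with the conformal factor instead. It uses the exhaustion $\Omega_m=B(o,m)$ from the existence theory, writes $g(t)=\widetilde{u}^{\,4/(n-2)}g(t_0)$ with $\widetilde{u}=u(\cdot,t)/u(\cdot,t_0)$, and observes that the approximants $\widetilde{u}_m$ satisfy $\widetilde{u}_m=1$ on $\partial\Omega_m$ and, by the maximum principle together with $R_{g(t_0)}\ge0$, $\widetilde{u}_m\le1$ in $\Omega_m$. This forces $\partial_\nu\widetilde{u}_m\ge0$ on $S_m=\partial\Omega_m$, which is an \emph{exact pointwise sign} on each finite sphere. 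Plugging $g_m(t)=\widetilde{u}_m^{\,4/(n-2)}g_m(t_0)$ into the mass integrand $(\partial_j g_{ij}-\partial_i g_{jj})$ produces a term $-\tfrac{n-1}{(n-2)\omega}\int_{S_m}\widetilde{u}_m^{(6-n)/(n-2)}\partial_\nu\widetilde{u}_m\,dS_m\le0$ plus remainders involving $\widetilde{u}_{,i}\cdot O(r^{-\tau})$, which vanish as $m\to\infty$ under $\tau>\tfrac{n-2}{2}$ by Theorem~\ref{A}. This yields $m(g(t))\le m(g(t_0))$ directly, and then Lemma~\ref{L1_decay} gives the Einstein--Hilbert inequality. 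The essential idea you are missing is to transfer the sign information from the interior ($R\ge0$) to the boundary via the maximum principle applied to $u$, not to $R$.
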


The organization of the paper is as follows.
In section 2 we first recall the definitions of weighted spaces for elliptic operators on asymptotically flat manifolds. Then we give the evolution equation for Einstein-Hilbert function for the Yamabe flow. Next we show the fine solution to Yamabe flow exists on asymptotically flat manifolds (see Corollary \ref{local_existence}). Finally, we present a slight generalize version of the maximum principle on noncompact manifolds obtained by K.Ecker and G.Huisken \cite{EH}. The proof of this maximum principle is given in the appendix.

In section 3, by assuming Theorem \ref{A} holds, we give the proofs Theorem \ref{main_5} and Theorem \ref{main_1}. First, we get a Bernstein-Bando-Shi type gradient estimate for Yamabe flow. Then we give the proofs of Theorem \ref{main_5} and Theorem \ref{main_1}.

The section \ref{section_5} and section \ref{section_Y_w} are devoted to proving Theorem \ref{A}.
   The main tool for the proof of Theorem \ref{A} is the weighted spaces for the parabolic operators on asymptotically flat manifolds defined in section \ref{section_5}. In the section 1 of \cite{B}, R.Bartnik obtained the global estimates of the weighted spaces for elliptic operators (see Definition \ref{elliptic_wss}) close to Laplacian by using scaling arguments. Inspired by Bartnik's work, we first introduce the definitions of weighted spaces for parabolic operators on asymptotically flat manifolds in section \ref{section_5}. Next we get some basic inequalities about these weighted spaces. In section \ref{section_Y_w} we first prove the global estimates of the weighted spaces defined in section \ref{section_5} for Yamabe flow. Finally, we give the proof of Theorem \ref{A}.

\section{preliminaries}

First, we recall
the definitions of weighted spaces (see \cite{B} and \cite{LP}) for elliptic operators on asymptotically flat manifolds.

\begin{defn}\label{elliptic_wss}
Suppose $(M^n,g)$ is an $n$-dimensional asymptotically flat manifold with asymptotic coordinates $\{x^i\}$. Denote $D^j_x v=\sup\limits_{|\alpha|=j}|\frac{\partial^{|\alpha|}}{\partial x_{i_1}\cdots\partial x_{i_j}}v|$.
Let $r(x)=|x|$ on $M_{\infty}$ (defined in Definition \ref{AE_def}) and extend $r$ to a smooth positive function on all of $M^n$. For $q\geq 1$ and $\beta\in \mathbb{R}$,
the weighted Lebesgue space $L^q_{\beta}(M)$ is defined as the set of locally integrable functions $v$ for which the norm
$$ ||v||_{L^q_\beta(M)}=\left\{
                      \begin{array}{ll}
                        (\int_{M}|v|^q r^{-\beta q-n}dx)^{\frac{1}{q}}, & \hbox{$q<\infty$;} \\
                        ess \sup\limits_{M} (r^{-\beta}|v|), & \hbox{$q=\infty$,}
                      \end{array}
                    \right.
$$
is finite.
Then the weighted Sobolev space $W^{k,q}_\beta(M)$ is defined as the set of functions $v$ for which $|D^j_xv|\in L^q_{\beta-j}(M)$ with the norm
$$
||v||_{W^{k,q}_\beta(M)}=\sum\limits^k_{j=0}||D^j_x v||_{L^q_{\beta-j}(M)}.
$$
For a nonnegative integer $k$, the weighted $C^k$ space $C^k_{\beta}(M)$ is defined as the set of $C^k$ functions $v$ for which the norm
$$
||v||_{C^k_\beta(M)}=\sum\limits_{j=0}^k\sup\limits_{M} r^{-\beta+j}|D^j_xv|
$$
is finite.
Then the weighted H\"{o}lder space $C^{k+\alpha}_{\beta}(M)$ is defined as the set of functions $v\in C^{k}_{\beta}(M)$ for which the norm
$$
||v||_{C^{k+\alpha}_\beta(M)}=||v||_{C^k_\beta(M)}+\sup\limits_{x\neq y\in M}\min(r(x),r(y))^{-\beta+k+\alpha}\frac{|D^k_xv(x)-D^k_xv(y)|}{|x-y|^{\alpha}}.
$$
is fnite.
\end{defn}

 In \cite{B}, R.Bartnik proved the following theorem by using weighted spaces defined in Definition \ref{elliptic_wss} (also see Theorem 9.6 in \cite{LP}).
\begin{thm}(R.Bartnik)\label{Bartnik}
If $(M^n,g)$ is an $n$-dimensional asymptotic flat manifold with $g_{ij}-\delta_{ij}\in C^{1+\alpha}_{-\tau}(M)$, $R_g\in L^1(M)$ and $R_g\geq 0$,
where $\tau>\frac{n-2}{2}$, then ADM mass $m(g)$ defined in (\ref{ADM_Mass}) is well-defined(i.e. independent of the choices of coordinates).
\end{thm}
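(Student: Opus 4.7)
The plan is to establish well-definedness in two stages: first, show that for one fixed asymptotic chart the limit in \eqref{ADM_Mass} exists as $r\to\infty$; second, show that the value does not depend on the chart.

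For the first stage, the essential tool is the classical pointwise identity
\[
\partial_i\bigl(\partial_j g_{ij}-\partial_i g_{jj}\bigr) = R_g + Q(g,\partial g,\partial^2 g),
\]
(up to sign and the conventional constant), where $Q$ is a sum of terms that are either quadratic in $(g-\delta_{ij})$ and $\partial g$, or linear in $\partial^2 g$ multiplied by a factor vanishing at infinity. Under $g_{ij}-\delta_{ij}\in C^{1+\alpha}_{-\tau}(M)$ we have $|\partial g|=O(r^{-\tau-1})$ and $|\partial^2 g|=O(r^{-\tau-2})$, so $|Q|=O(r^{-2\tau-2})$. The hypothesis $\tau>\tfrac{n-2}{2}$ is precisely what makes $r^{-2\tau-2}$ integrable at infinity, since then $-2\tau-2+n<0$; therefore $Q\in L^1(M_\infty)$. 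Combined with $R_g\in L^1(M)$, applying the divergence theorem to $V^i:=\partial_j g_{ij}-\partial_i g_{jj}$ on the annulus $\{R_0\le r\le R\}$ shows the boundary flux $\int_{S_R} V^i\,dS_i$ is Cauchy as $R\to\infty$, yielding the existence of the limit.

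For the second stage, given two asymptotic charts $\{x^i\}$ and $\{y^i\}$ on $M_\infty$, one studies the change of variables $y=\phi(x)$. Using Bartnik's weighted elliptic theory for the Laplacian on $\mathbb{R}^n\setminus B$ (Section~1 of \cite{B}), any asymptotic chart can be improved to a harmonic asymptotic chart, with the difference lying in $W^{2,q}_{1-\tau}$ for $q$ sufficiently large. In harmonic coordinates the flux form $V^i$ simplifies to a geometric expression involving the scalar curvature plus a divergence whose boundary integral on $S_R$ vanishes in the limit by the same weighted decay count. Passing each of the two charts to its harmonic normalization and comparing shows that both yield the same limit, which is the coordinate-invariant value $m(g)$.

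The main obstacle is the coordinate-invariance step: existence of the limit in a single chart is just a decay count combined with Stokes, but coordinate independence at the critical threshold $\tau>\tfrac{n-2}{2}$ leaves very little margin between the decay of $Q$ and the growth of a potential gauge transformation, so a naive estimate is not enough. The resolution is to produce harmonic asymptotic coordinates via Bartnik's weighted Fredholm theory and to estimate the gauge correction in a sharp weighted Hölder/Sobolev class. The assumption $R_g\in L^1(M)$ is essential because the divergence identity converts the boundary flux into an interior integral of $R_g+Q$; the sign assumption $R_g\ge 0$ is not actually used for well-definedness, but is standard in the positive mass setting.
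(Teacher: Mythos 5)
The paper does not actually prove this statement: Theorem \ref{Bartnik} is quoted from Bartnik \cite{B} (see also Theorem 9.6 of \cite{LP}), so your proposal can only be compared with the cited source. There it matches Bartnik's argument in both stages: existence of the limit via the identity expressing $R_g$ as the divergence of the flux integrand plus an error term, with the threshold $\tau>\frac{n-2}{2}$ (i.e.\ $2\tau+2>n$) making the error integrable and $R_g\in L^1(M)$ handling the rest; and chart-independence via harmonic asymptotic coordinates produced by the weighted elliptic/Fredholm theory, together with the structure theorem that any two asymptotic charts differ by a rigid motion plus a decaying correction. Your remark that $R_g\geq 0$ plays no role in well-definedness is also correct.

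One step, however, does not survive under the hypothesis as literally stated. From $g_{ij}-\delta_{ij}\in C^{1+\alpha}_{-\tau}(M)$ you get $|\partial g|=O(r^{-\tau-1})$, but \emph{no} pointwise bound $|\partial^2 g|=O(r^{-\tau-2})$; yet the error term $Q$ contains pieces of the form $(g^{jk}-\delta^{jk})\,\partial^2 g$, so your claim $|Q|=O(r^{-2\tau-2})$ is unjustified at this level of regularity. The repair is standard but must be said: integrate by parts to trade $(g-\delta)\,\partial^2 g$ for quadratic first-derivative terms of size $O(r^{-2\tau-2})$ plus boundary integrals of size $O(r^{n-2-2\tau})\to 0$ (this is in effect what Bartnik does, working in $W^{2,q}_{-\tau}$ with $q>n$, whose weak second derivatives lie in $L^q_{-\tau-2}$), or invoke the stronger decay $C^{2+\alpha}_{-\tau}$ of Definition \ref{AE_def}, which is what every metric to which this paper applies the theorem actually satisfies. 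As written, the decay count in your first stage is a gap, albeit a fixable one; the same caveat propagates to the "same weighted decay count" invoked in your harmonic-coordinate step.
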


Second, we present the evolution equation for Einstein-Hilbert function for the Yamabe flow.
\begin{lem}
\label{L1_decay}
Let $u(x,t)$, $0\leq t< t_{max}$, be the fine solution to the Yamabe flow (\ref{yamabe_flow_u}) on an $n$-dimensional asymptotically flat manifold $(M^n,g_0)$ of order $\tau>\frac{n-2}{2}$.
Set $g(t)=u^\frac{4}{n-2}g_0$. Assume that $\int_{M}R^2dvol_{g(t)}$ is finite for $0\leq t<t_{max}$.  We have
\begin{align}\label{L1_decay_1}
\frac{d }{d t}\int_{M}Rdvol_{g(t)}=4\omega\frac{d }{d t}m(g(t))
+(1-\frac{n}{2})\int_{M}R^2dvol_{g(t)},
\end{align}
 where $m(g)$ is the ADM mass defined in (\ref{ADM_Mass}) with any asymptotic coordinates $\{x_i\}$.
\end{lem}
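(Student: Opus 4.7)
The plan is to combine the pointwise evolution equations of the Yamabe flow with the divergence theorem on an exhaustion by Euclidean balls $B_r$, and then identify the resulting boundary flux with the time derivative of the ADM integral. The standard Yamabe-flow formulas $\partial_t dvol_g=-\tfrac{n}{2}R\,dvol_g$ and $\partial_t R=(n-1)\Delta_g R+R^2$ give
$$\frac{d}{dt}\int_{B_r} R\,dvol_g = (n-1)\int_{B_r}\Delta_g R\,dvol_g + \Bigl(1-\frac{n}{2}\Bigr)\int_{B_r} R^2\,dvol_g.$$
The divergence theorem converts the first integral into $(n-1)\int_{\partial B_r}\langle\nabla R,\nu\rangle_g\,dA_g$. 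Sending $r\to\infty$ and invoking the finiteness hypothesis on $\int_M R^2\,dvol_g$, the lemma reduces to the identity
$$4\omega\,\frac{d}{dt}m(g(t)) \;=\; (n-1)\lim_{r\to\infty}\int_{\partial B_r}\langle\nabla R,\nu\rangle_g\,dA_g. \qquad (\ast)$$

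To establish $(\ast)$ I differentiate the ADM definition directly. Interchanging $\partial_t$ with $\lim_{r\to\infty}$, which is legitimized by the weighted decay of $g(t)-\delta$ guaranteed by Theorem \ref{A}, and using $\partial_t g_{ij}=-Rg_{ij}$, I obtain
$$4\omega\,\frac{d}{dt}m(g(t)) = \lim_{r\to\infty}\int_{S_r}\bigl[-(\partial_j R)g_{ij}+(\partial_i R)g_{jj}-R\,\partial_j g_{ij}+R\,\partial_i g_{jj}\bigr]\,dS^i.$$
Writing $g_{ij}=\delta_{ij}+h_{ij}$ with $h_{ij}\in C^{2+\alpha}_{-\tau}(M)$ (again via Theorem \ref{A}), the leading contribution of the first two terms collapses to $(n-1)\int_{S_r}\partial_i R\,dS^i$, while the remaining pieces are pointwise bounded by $r^{-\tau-1}(|R|+|\nabla R|)$ against an area element of size $r^{n-1}$; provided $R$ and $\nabla R$ have adequate spatial decay, these corrections vanish in the limit. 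Finally, because $g^{ij}-\delta^{ij}$, $\nu_g-\nu_{\mathrm{Euc}}$ and $dA_g-dA_{\mathrm{Euc}}$ are all $O(r^{-\tau})$, one can replace $\int_{\partial B_r}\langle\nabla R,\nu\rangle_g\,dA_g$ by $\int_{S_r}\partial_i R\,dS^i$ with vanishing error, which yields $(\ast)$ and hence the claim.

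The main obstacle is justifying both the interchange $\partial_t\lim_r=\lim_r\partial_t$ in the ADM definition and the disappearance of every remainder term in the boundary expansion. Both issues come down to uniform weighted decay estimates for $g(t)-\delta$ and for $R(t),\nabla R(t)$ on any time interval $[0,T]\subset[0,t_{\max})$; these are precisely what the weighted parabolic theory (Theorem \ref{A}) and the Bernstein-Bando-Shi type gradient estimate to be proved in Section 3 are designed to supply. Modulo those inputs, the verification is routine bookkeeping on the decay rates, and no additional analytic machinery is needed.
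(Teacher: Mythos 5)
Your proposal is correct, and its skeleton coincides with the paper's: integrate the evolution of $R\,dvol_{g(t)}$ over an exhaustion by balls, convert the Laplacian term into a boundary flux via the divergence theorem, identify the surviving flux with $4\omega\frac{d}{dt}m(g(t))$, and absorb the bulk remainder into $(1-\frac{n}{2})\int_M R^2\,dvol_{g(t)}$. The difference is one of organization rather than substance. The paper keeps the variation $v_{ij}=\partial_t g_{ij}$ general and invokes the first-variation formula of the Einstein--Hilbert functional (equations (8.9) and (8.11) of \cite{LP}), so the boundary term emerges already in ADM form, namely $\int_{S_r}(v_{ij,j}-v_{jj,i})\,dS^i$ up to a $(1+O(r^{-1}))$ factor, and the identification with $4\omega\frac{d}{dt}m(g(t))$ is immediate once $d/dt$ is interchanged with $\lim_{r\to\infty}$; only at the end is $v_{ij}=-Rg_{ij}$ substituted, turning $-\int_M v^{ij}G_{ij}\,dvol_{g(t)}$ into the $R^2$ term. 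You instead specialize to the Yamabe flow at the outset, using $\partial_t R=(n-1)\Delta R+R^2$ (the paper's (\ref{scalar})) together with $\partial_t\,dvol=-\frac{n}{2}R\,dvol$, which obliges you to prove the matching identity $(\ast)$ by differentiating the ADM integral directly and expanding $g=\delta+h$; that extra bookkeeping is exactly what the paper's general variation formula packages for free, while your version has the mild advantage of using only Yamabe-specific equations already stated in the paper. Both routes rest on the same external inputs --- the weighted decay $g(t)-\delta\in C^{2+\alpha}_{-\tau}$ of Theorem \ref{A} and the decay $|\nabla R|=O(r^{-\tau-2})$ obtained from Theorem \ref{gradient_estimates_for_R} --- to kill the boundary error terms and to justify interchanging $\partial_t$ with $\lim_{r\to\infty}$, and the paper is no more explicit about that interchange than you are, so your proposal has no gap relative to the paper's own level of rigor.
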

\begin{proof}
Let $\frac{\partial g_{ij}}{\partial t}=v_{ij}$. Then it follows from the variation of Einstein-Hilbert functional (see (8.9) and (8.11) in \cite{LP}) that
\begin{align}\label{Einstein_Variation}
&\qquad\frac{d }{d t}\int_{M}Rdvol_{g(t)}
=4\omega\frac{d }{d t}m(g(t))
-\int_{M}v^{ij}G_{ij}dvol_{g(t)},
\end{align}
  where $G_{ij}=Rc_{ij}-\frac{1}{2}Rg_{ij}$ is the Einstein tensor.
   Then Lemma \ref{L1_decay} follows from (\ref{Einstein_Variation}) and  $v_{ij}=-Rg_{ij}$ for Yamabe flow.
   For sake of convenience for the readers, we give a proof of (\ref{Einstein_Variation}) below. By the variation of scalar curvature (see Lemma 2.7 in \cite{CLN}),
$$
\frac{\partial }{\partial t}R=-\Delta (g^{ij}v_{ij})+div(div(v))-v^{ij}Rc_{ij}.
$$
Hence
$$
\frac{\partial }{\partial t}(Rdvol_{g(t)})=(-\Delta (g^{ij}v_{ij})+div(div(v))-v^{ij}(Rc_{ij}-\frac{1}{2}Rg_{ij}))dvol_{g(t)}.
$$
Then
\begin{align*}
&\qquad\frac{d }{d t}\int_{B(o,r)}Rdvol_{g(t)}\\
&=\int_{B(o,r)}(-\Delta (g^{ij}v_{ij})+div(div(v)))dvol_{g(t)}-\int_{B(o,r)}v^{ij}G_{ij}dvol_{g(t)}\\
&=\int_{S_r}< \nabla(g^{ij}v_{ij})-div(v),\nu>dS-\int_{B(o,r)}v^{ij}G_{ij}dvol_{g(t)}\\
&=\int_{S_r}<\xi,\nu>dS-\int_{B(o,r)}v^{ij}G_{ij}dvol_{g(t)},
\end{align*}
where $\xi_i=(v_{ij,j}-v_{jj,i})(1+O(r^{-1}))$, where $r=r(x)$ is the function defined in Definition \ref{elliptic_wss}.
It follows that
\begin{align}
\frac{d }{d t}\int_{M}Rdvol_{g(t)}&=\lim\limits_{r\to\infty}\int_{S_r}(v_{ij,j}-v_{jj,i})dS^i-\int_{M}v^{ij}G_{ij}dvol_{g(t)}\nonumber\\
&=4\omega\frac{d }{d t}m(g(t))
-\int_{M}v^{ij}G_{ij}dvol_{g(t)}.\nonumber
\end{align}
\end{proof}

Third, we show that the fine solution to Yamabe flow always exists on asymptotically flat manifolds.
Recall the short time existence of smooth solution to Yamabe flow (\ref{yamabe_flow_u}) on noncompact manifolds obtained by B.L.Chen and X.P.Zhu \cite{CZ} and Y.An, L.Ma \cite{AM}.
\begin{thm}\cite{CZ}\cite{AM}\label{short_existence}
If $(M^n,g_0)$ is an $n$-dimensional complete manifold with bounded scalar curvature, then Yamabe flow (\ref{yamabe_flow_u}) has
a smooth solution on a maximal time interval $[0,t_{max})$ with $t_{max}>0$ such that either $t_{max}=+\infty$ or the evolving metric contracts to a point at finite time $t_{max}$.
\end{thm}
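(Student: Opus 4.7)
The plan is to treat (\ref{yamabe_flow_u}) as a quasilinear parabolic equation for $u$: writing $\partial_t u^N = N u^{N-1}\partial_t u$ turns it into
\[
\partial_t u = \frac{1}{N u^{N-1}}\bigl(\Delta_{g_0} u - a R_{g_0} u\bigr),
\]
which is uniformly parabolic so long as $u$ stays bounded above and away from $0$. Since $|R_{g_0}|$ is assumed bounded, the zeroth-order coefficient is bounded, placing the problem in the standard framework of quasilinear parabolic equations.

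First I would exhaust $M^n$ by a nested sequence of relatively compact open sets $\Omega_k$ with smooth boundary, and on each $\Omega_k$ solve the initial-boundary value problem with $u\equiv 1$ on $\partial\Omega_k$ and $u(\cdot,0)\equiv 1$ by standard theory, obtaining smooth solutions $u_k$. A maximum-principle comparison with the ODE $y'=Cy$, where $C$ is controlled by $\sup_M |R_{g_0}|$, then yields uniform bounds $e^{-Ct}\leq u_k \leq e^{Ct}$ on a common time interval $[0,T]$ independent of $k$.

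With these $C^0$ bounds the equation is uniformly parabolic with bounded coefficients on $[0,T]$, so interior parabolic Schauder estimates (applied after linearization) give uniform $C^{2+\alpha,1+\alpha/2}$ estimates for $u_k$ on compact subsets of $M^n\times[0,T]$. A diagonal-subsequence argument then produces a smooth limit $u$ on $M^n\times[0,T]$ solving (\ref{yamabe_flow_u}); iterating, the solution extends to a maximal time $t_{max}>0$.

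To identify the blow-up alternative at finite $t_{max}$: if $u$ stays uniformly bounded above and away from $0$ on $[0,t_{max})$, then higher-order a priori estimates (including derivative bounds on $R$ of Bernstein-Bando-Shi type) let us restart past $t_{max}$, contradicting maximality. The case $u\to\infty$ is excluded by a maximum-principle comparison using the bounded initial scalar curvature, so only $u\to 0$ remains, which means $g(t)=u^{4/(n-2)}g_0$ degenerates and the metric ``contracts to a point.'' The main obstacle throughout is the non-compactness of $M^n$: all estimates must be interior and uniform on compact pieces, and passage to the limit relies on the bounded-curvature hypothesis to produce barrier functions that are uniform across the exhausting domains $\Omega_k$.
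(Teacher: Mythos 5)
Your proposal is correct and follows essentially the same route as the paper's source construction (cited from \cite{CZ}, \cite{AM} and sketched in the remarks after the theorem): Dirichlet problems on an exhaustion $\Omega_m$ with boundary data $1$, maximum-principle bounds uniform in $m$, interior parabolic estimates plus a diagonal argument, and the dichotomy at $t_{max}$ coming from whether the lower bound on $u$ degenerates. One cosmetic slip: because of the factor $\frac{1}{Nu^{N-1}}$ the natural comparison ODE is $y'=\pm c\,y^{2-N}$ rather than $y'=\pm Cy$, giving the paper's bounds $\bigl(1\pm\frac{n-2}{(n-1)(n+2)}\sup_{M}|R_{g_0}|\,t\bigr)^{\frac{n-2}{4}}$ instead of exponentials --- the lower one vanishes in finite time, which is exactly the source of the ``contracts to a point'' alternative, so your barrier $e^{-Ct}$ is only valid on a fixed short interval with $C$ chosen depending on $T$, not uniformly up to $t_{max}$.
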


We remark that solution $u(x,t)$ to Yamabe flow (\ref{yamabe_flow_u}) in Theorem \ref{short_existence} satisfies $0<c_1<u(x,t)<c_2$ for some constant $c_1, c_2$ on any compact subinterval  $[0,T]$, $T<t_{max}$, where $c_1$ and $c_2$ are the constants depending on $T$. In fact, the solution $u(x,t)$ to Yamabe flow (\ref{yamabe_flow_u}) in Theorem \ref{short_existence} is obtained, see \cite{CZ} and \cite{AM}, by a sequence of approximation solutions
$u_m(x,t)$ which solve the following Dirichlet problem for a sequence of exhausting bounded smooth domains
\begin{equation}\label{yamabe_flow_u_local1}
\left\{
\begin{array}{ll}
         \frac{\partial u^N_m}{\partial t}=L_{g_0}u_m, \quad &\ x\in\Omega_m, \ t>0,\\
                   u_m(x,t)>0, \quad &\ x\in\Omega_m, \ t>0,\\
                   u_m(x,t)=1, \quad &\ x\in \partial \Omega_m, \ t>0,\\
                  u_m(\cdot,0)=1, \quad &\ x\in\Omega_m,\\
\end{array}
\right.
\end{equation}
where $\Omega_1\subset\Omega_2\subset\cdots$, $N=\frac{n+2}{n-2}$, $L_{g_0}u_m=\Delta_{g_0}u_m-aR_{g_0}u_m$ and
$a=\frac{n-2}{4(n-1)}$. Since  $u_m(x,t)=1$ is bounded on $\partial \Omega_m$,  by the maximum principle, we conclude that
\begin{align*}
\max\limits_{\Omega_m} u_m(t)\leq (1+\frac{n-2}{(n-1)(n+2)}\sup\limits_{M^n}|R_{g_0}|t)^{\frac{n-2}{4}}.
\end{align*}
and
\begin{align*}
\min\limits_{\Omega_m} u_m(t)\geq (1-\frac{n-2}{(n-1)(n+2)}\sup\limits_{M^n}|R_{g_0}|t)^{\frac{n-2}{4}}.
\end{align*}
We see that $u_m(t)$ has an uniformly upper bound on $[0,t_{\max})$ for $t_{max}<\infty$ and uniformly positive lower bound on $[0,\frac{(n-1)(n+2)}{2(n-2)\sup\limits_{M^n}|R_{g_0}|}]$.
We then consider
\begin{equation}\label{yamabe_flow_u_local1}
\left\{
\begin{array}{ll}
         \frac{\partial u^N_m}{\partial t}=L_{g_0}u_m, \quad &\ x\in\Omega_m, \ t>t_0,\\
                   u_m(x,t)>0, \quad &\ x\in\Omega_m, \ t>t_0,\\
                   u_m(x,t)=1, \quad &\ x\in \partial \Omega_m, \ t>t_0,\\
                  u_m(\cdot,t_0)=u^m_0(x), \quad &\ x\in\Omega_m,\\
\end{array}
\right.
\end{equation}
where $0<\delta\leq u^m_0(x)\leq C, x\in\Omega_m$. Again by the maximum principle, we have
\begin{align*}
\min\limits_{\Omega_m} u_m(x,t)\geq (\delta^{\frac{4}{n-2}}-\frac{n-2}{(n-1)(n+2)}\sup\limits_{M^n}|R_{g_0}|(t-t_0))^{\frac{n-2}{4}}.
\end{align*}
 Then we get that there exists a constant
 \begin{equation}\label{rrrrrrrrrrrrrrrrr}
 \Delta T(\delta,\sup\limits_{M^n}|R_{g_0}|)=\frac{(n-1)(n+2)\delta^{\frac{4}{n-2}}}{2(n-2)\sup\limits_{M^n}|R_{g_0}|}
 \end{equation}
  such that
the Dirichlet problem (\ref{yamabe_flow_u_local1}) has the unique solution satisfying $u_m(x,t)\geq 2^{-\frac{n-2}{4}}\delta$ on $[t_0,t_0+\Delta T(\delta,\sup\limits_{M^n}|R_{g_0}|)]$.
 For $t_0=0$ and $u_m(\cdot,0)=1$ in (\ref{yamabe_flow_u_local1}), there exists $T_1=\Delta T_1(1,\sup\limits_{M^n}|R_{g_0}|)$ such that
$u_m(x,t)\geq 2^{-\frac{n-2}{4}}$ on $[0,T_1]$. Take $\delta_1=\inf\limits_{m,\Omega_m}u_m(x,T_1)$. For $t_0=T_{k-1}$  and $u^m_0(x)=u_m(\cdot,T_{k-1})$  in (\ref{yamabe_flow_u_local1}), there exists $T_k(\delta_{k-1},\sup\limits_{M^n}|R_{g_0}|)=T_{k-1}+\Delta T_{k-1}(\delta_{k-1},\sup\limits_{M^n}|R_{g_0}|)$ such that $u_m(x,t)\geq 2^{-\frac{n-2}{4}}\delta_{k-1}$ on $[T_{k-1},T_k]$. Take $\delta_k=\inf\limits_{m,\Omega_m}u_m(x,T_k)$.
Then we have either $T_k\to T<\infty$ or $T_k\to \infty$. If $T_k\to T<\infty$, then $\Delta T_k \to 0$.  By (\ref{rrrrrrrrrrrrrrrrr}), we see that $\delta_k\to 0$ and $T=t_{max}$ is the blow-up time such that $\inf_{M} g(\cdot,t)\to 0$ as $t\to T$.
This implies the solution $u(x,t)$ to Yamabe flow (\ref{yamabe_flow_u}) in Theorem \ref{short_existence} satisfies $0<c_1<u(x,t)<c_2$ for some constants $c_1, c_2$ on any compact subinterval  $[0,T]$ for $T<t_{max}$.

Based on Theorem \ref{short_existence} and the remarks above, we can show that there exists a fine solution to Yamabe flow (\ref{yamabe_flow_u}) on asymptotically flat manifolds.
\begin{cor}\label{local_existence}
If $(M^n,g_0)$ is an $n$-dimensional asymptotically flat manifold of order $\tau>0$, then Yamabe flow (\ref{yamabe_flow_u}) has
a fine solution $u(x,t)$ on a maximal time interval $[0,t_{max})$ with $t_{max}>0$.
\end{cor}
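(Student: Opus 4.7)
The plan is to apply Theorem \ref{short_existence} to obtain a smooth solution, extract the $L^\infty$ bounds from the remarks already displayed after that theorem, and then upgrade these to the gradient and Riemann curvature bounds required by the definition of a fine solution. Since $(M^n, g_0)$ is asymptotically flat of order $\tau > 0$ with $g_{ij} - \delta_{ij} \in C^{2+\alpha}_{-\tau}(M)$, both $R_{g_0}$ and $Rm(g_0)$ lie in $C^{\alpha}_{-\tau-2}(M)$ and are therefore bounded on $M$. Theorem \ref{short_existence} then yields a smooth solution $u(x,t)$ on a maximal interval $[0, t_{max})$, and the iterative maximum principle argument set out after that theorem produces constants $0 < c_1 \leq c_2$, depending on $T < t_{max}$ and $\sup_M |R_{g_0}|$, with $c_1 \leq u(x,t) \leq c_2$ on $M \times [0, T]$, and shows that $\inf_M u(\cdot,t) \to 0$ whenever $t_{max}$ is finite.

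Next I would derive uniform bounds $\sup_{M \times [0,T]} |\nabla_{g_0} u| \leq C$ and $\sup_{M \times [0,T]} |\nabla_{g_0}^2 u| \leq C$. On the interval $[0,T]$ the equation $N u^{N-1} \partial_t u = \Delta_{g_0} u - a R_{g_0} u$ is uniformly parabolic with bounded $C^\alpha$ coefficients, so classical interior parabolic Schauder estimates applied on $g_0$-unit parabolic cylinders give $u \in C^{2+\alpha,\,1+\alpha/2}$ with a uniform norm depending only on $\|u\|_\infty$, $\|R_{g_0}\|_{C^\alpha}$, and the geometry of $(M, g_0)$ at unit scale. The Riemann curvature of $g = u^{4/(n-2)} g_0$ is then a universal algebraic expression in $u^{-1}$, $\nabla_{g_0} u$, $\nabla_{g_0}^2 u$, $Rm(g_0)$, and contractions thereof, so the lower bound $u \geq c_1 > 0$ together with the bounded derivatives and the bounded $Rm(g_0)$ yields $\sup_{M \times [0,T]} |Rm(g)| \leq C$.

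It remains to verify the blow-up alternative. If $t_{max} < \infty$ and $|Rm(g)|$ were to stay bounded as $t \uparrow t_{max}$, then $R$ would stay bounded, and the maximum principle argument displayed after Theorem \ref{short_existence} would propagate a uniform positive lower bound on $u(\cdot,t_{max})$; one could then apply Theorem \ref{short_existence} with initial metric $g(t_{max})$ to extend the solution past $t_{max}$, contradicting maximality. Hence $\lim_{t \to t_{max}} \sup_M |Rm|(\cdot,t) = \infty$ whenever $t_{max} < \infty$. The main technical obstacle I anticipate is keeping the Schauder and curvature estimates uniform across the noncompact manifold. Asymptotic flatness is precisely what resolves this: outside a large compact set $g_0$ is close to Euclidean and both $R_{g_0}$ and $Rm(g_0)$ are small, so interior estimates on Euclidean unit cylinders apply with constants independent of the base point, while on the compact core ordinary compactness closes the argument.
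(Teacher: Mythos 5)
Your proposal follows essentially the same route as the paper: invoke Theorem \ref{short_existence} together with the maximum-principle remarks preceding Corollary \ref{local_existence} to get $0<c_1\leq u\leq c_2$ on each $[0,T]$, then obtain uniform $C^{2+\alpha,1+\alpha/2}$ bounds on $g_0$-unit parabolic cylinders (uniform in the base point thanks to asymptotic flatness), and finally read off the bound on $|Rm(g)|$ from the algebraic expression of the curvature of $g=u^{4/(n-2)}g_0$ in terms of $u^{-1}$, $\nabla_{g_0}u$, $\nabla^2_{g_0}u$ and $Rm(g_0)$.

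The one step that does not work as you stated it is the direct appeal to Schauder theory: you claim the equation $Nu^{N-1}\partial_t u=\Delta_{g_0}u-aR_{g_0}u$ has ``bounded $C^\alpha$ coefficients,'' but the coefficient $Nu^{N-1}$ depends on the unknown $u$ itself, and what you have at that stage is only an $L^\infty$ bound on $u$; a uniform-in-$p$ H\"older bound on $u$ is exactly what is missing. This is why the paper first applies the Krylov--Safonov estimate (which requires only uniform parabolicity and $\sup|u|$, both available from $c_1\leq u\leq c_2$) to get a uniform interior $C^{\alpha,\alpha/2}$ bound, and only then bootstraps with Schauder. With that insertion your argument closes. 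Two further small remarks: your explicit verification of the blow-up alternative is a point the paper leaves implicit, and it can be streamlined---if $|Rm(g)|$ stayed bounded up to a finite $t_{max}$, then $|R|\leq C$ and the identity $u^{4/(n-2)}(x,t)=\exp\bigl(-\int_0^t R(x,s)\,ds\bigr)$ keeps $u$ bounded away from zero, directly contradicting the conclusion in the remarks that $\inf_M u(\cdot,t)\to 0$ at a finite maximal time, without needing to restart the flow from $g(t_{max})$ (which would itself require justifying smooth convergence of $g(t)$ as $t\to t_{max}$).
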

\begin{proof}
By Theorem \ref{short_existence}, there exists a smooth solution $u(x,t)$ to Yamabe flow (\ref{yamabe_flow_u}) on a maximal time interval $[0,t_{max})$ with $t_{max}>0$ such that $0<c_1<u(x,t)<c_2$ for some constants $c_1, c_2$ on any compact subinterval $[0,T]$, $T<t_{max}$. Let $r_0$ be a fixed positive constant. Applying the Krylov-Safonov estimate and Schauder estimate of parabolic equations to (\ref{yamabe_flow_u}) on $B_{g_0}(p,r_0)$ (see \cite{L}), we have
$||u||_{C^{2+\alpha,1+\frac{\alpha}{2}}(B_{g_0}(p,r_0)\times [0,T])}\leq C$, where $C$ is independent of the point $p$.
Since $g(\cdot,t)=u^{\frac{4}{n-2}}g_0$, we have $\sup\limits_{B_{g_0}(p,r_0)\times [0,T]}|Rm(x,t)|\leq C$.
\end{proof}

Finally, we present a slight generalized version of the maximum principle obtained by K.Ecker and G.Huisken (see Theorem 4.3 in \cite{EH}), where they consider
the maximum principle for the parabolic equation $\frac{\partial }{\partial t}v - \Delta v\leq b\cdot \nabla v+cv$ on noncompact manifolds, where $\Delta$ and $\nabla$ depend on $g(t)$. With little more observation, K.Ecker and G.Huisken's maximum principle can be easily generalized to fitting the equation $\frac{\partial }{\partial t}v -  div(a \nabla v)\leq b\cdot \nabla v+cv$.
 We shall give a proof of Theorem \ref{Ecker_Huisken} in the appendix for sake of convenience for the readers.

\begin{thm}\label{Ecker_Huisken}
Suppose that the complete noncompact manifold $M^n$ with Riemannian metric $g(t)$ satisfies the uniformly volume
growth condition
\begin{align*}
    vol_{g(t)}(B_{g(t)}(p,r))\leq exp(k(1+r^2))
\end{align*}
for some point $p\in M$ and a uniform constant $k>0$ for all $t\in [0,T]$. Let $v$ be a differentiable function on $M\times (0,T]$ and
continuous on $M\times [0,T]$. Assume that $v$ and $g(t)$ satisfy

(i) the differential inequality
\begin{align*}
    \frac{\partial }{\partial t}v - div(a \nabla v)\leq b\cdot \nabla v+cv,
\end{align*}
where the vector field $b$ and the function $a$ and $c$ are uniformly bounded
\begin{align*}
    0<\alpha_1' \leq a\leq \alpha_1, \sup\limits_{M\times [0,T]} |b|\leq \alpha_2, \sup\limits_{M\times [0,T]} |c|\leq \alpha_3,
\end{align*}
for some constants $\alpha_1',\alpha_1,\alpha_2<\infty$. Here $\Delta$ and $\nabla$ depend on $g(t)$.

(ii) the initial data
\begin{align*}
    v(p,0)\leq 0,
\end{align*}
for all $p\in M$.

(iii) the growth condition
\begin{align*}
    \int^T_0(\int_{M}exp[-\alpha_4 d_{g(t)}(p,y)^2]|\nabla v|^2(y)d \mu_t)dt<\infty.
\end{align*}
for some constant $\alpha_4>0$.

(iv) bounded variation condition in metrics
\begin{align*}
    \sup\limits_{M\times[0,T]}|\frac{\partial}{\partial t}g(t)|\leq \alpha_5
\end{align*}
for some constant $\alpha_5<\infty$.

Then, we have
\begin{align*}
    v\leq 0
\end{align*}
on $M\times [0,T]$.
\end{thm}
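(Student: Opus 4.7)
The plan is to adapt the Ecker--Huisken weighted $L^2$ test-function argument, with integration by parts against $\operatorname{div}(a\nabla v)$ replacing integration by parts against $\Delta v$; the uniform two-sided bound $0<\alpha_1'\le a\le\alpha_1$ is exactly what allows the divergence form operator to play the same role as the Laplacian.

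First I would set $w=\max(v,0)$ and choose a Lipschitz spatial cutoff $\phi_R$ which equals $1$ on $B_{g(0)}(p,R)$, vanishes outside $B_{g(0)}(p,2R)$, and has $|\nabla_{g(t)}\phi_R|\le C/R$ uniformly in $t\in[0,T]$ (available by hypothesis (iv)), together with the parabolic Gaussian weight
\begin{equation*}
\eta(y,t)=\exp\!\Bigl(-\frac{d_{g(t)}(p,y)^2}{K(T+1-t)}\Bigr),
\end{equation*}
where $K$ will be chosen large depending on $\alpha_1,\alpha_1',\alpha_2,\alpha_5$. Multiplying the differential inequality by $w\phi_R^2\eta$ and integrating over $M\times[0,\tau]$, integration by parts on the divergence term produces the coercive contribution $\int_0^\tau\!\!\int_M a|\nabla w|^2\phi_R^2\eta\,d\mu_t\,dt$ plus two cross terms involving $\nabla\phi_R$ and $\nabla\eta$. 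The structural fact that underlies the argument is the pointwise inequality
\begin{equation*}
\partial_t\eta+\alpha_1\,\frac{|\nabla\eta|^2}{\eta}\le \Lambda\,\eta,
\end{equation*}
which holds (with some $\Lambda<\infty$) once $K$ is large, adjusted by $\alpha_5$ to account for the metric's time dependence inside $d_{g(t)}$. Young's inequality absorbs $b\cdot\nabla v$ and the cross term $aw\phi_R^2\nabla w\cdot\nabla\eta$ into the coercive term, and the zero-order term $cv$ is trivial; one picks up an additional $\tfrac12\int w^2\,\partial_t(\phi_R^2\eta\,d\mu_t)$ piece which is harmless by (iv) and by the $\Lambda\eta$ bound above.

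Next I would take $R\to\infty$. The annular error from $|\nabla\phi_R|$ is bounded via Cauchy--Schwarz by a multiple of $\int_0^T\!\!\int_{\{R\le d_{g(t)}(p,\cdot)\le 2R\}}\eta\,|\nabla v|^2\,d\mu_t\,dt$; enlarging $K$ so that $1/(K(T+1))<\alpha_4$, this is dominated on the annulus by $\int_0^T\!\!\int_M e^{-\alpha_4 d_{g(t)}(p,y)^2}|\nabla v|^2\,d\mu_t\,dt$, which is finite by (iii), and hence the annular integral tends to zero as $R\to\infty$. The initial-time term vanishes by (ii), and what remains is
\begin{equation*}
\tfrac{1}{2}\int_M w^2\eta\,d\mu_\tau\le C\int_0^\tau\!\int_M w^2\eta\,d\mu_t\,dt
\end{equation*}
for every $0<\tau\le T$, with $C=C(\alpha_1',\alpha_2,\alpha_3,\alpha_5,\Lambda)$. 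Gronwall's inequality applied to $\tau\mapsto\int_M w^2\eta\,d\mu_\tau$ then forces $w\equiv 0$ on $M\times[0,T]$, proving $v\le 0$.

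The main obstacle is the clean tuning of the constant $K$: in the original Ecker--Huisken argument $K$ is calibrated to diffusion coefficient $1$, whereas here the cross term $\int aw\phi_R^2\nabla w\cdot\nabla\eta\,d\mu_t$ involves the upper bound $\alpha_1$ but must be absorbed using the lower bound $\alpha_1'$, so $K$ must be large relative to the ellipticity ratio $\alpha_1/\alpha_1'$; this is the point where the divergence-form generalization genuinely exploits uniform ellipticity rather than merely boundedness of $a$. A subsidiary technicality is that $\partial_t d_{g(t)}(p,y)^2$ contributes a term of size $\alpha_5 d^2\eta$ by (iv), which must be folded into the $\Lambda\eta$ bound by a further enlargement of $K$.
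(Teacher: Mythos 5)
Your overall strategy (the Ecker--Huisken weighted $L^2$ test-function argument, with uniform ellipticity of $a$ standing in for the Laplacian) is the same as the paper's, but three of your concrete steps fail, and they fail in ways that are linked. First, you work with $w=\max(v,0)$ without the upper truncation; the paper (following Ecker--Huisken) uses $f_K=\max\{\min(f,K),0\}$, and this is not cosmetic. The theorem assumes no growth bound on $v$ itself, only on the Gaussian-weighted gradient in (iii), so $\int_M w^2\eta\,d\mu_t$ may be infinite (breaking your Gronwall step), and, worse, the cross term produced by $\nabla\phi_R$ is
\begin{equation*}
2\int a\,w\,\phi_R\,\eta\,\nabla w\cdot\nabla\phi_R\,d\mu_t
\;\le\;\Bigl(\int_{\mathrm{ann}} a\,\phi_R^2\,\eta\,|\nabla w|^2\Bigr)^{1/2}\Bigl(\int_{\mathrm{ann}} a\,w^2\,|\nabla\phi_R|^2\,\eta\Bigr)^{1/2},
\end{equation*}
so the annular error necessarily carries a factor $w^2$; your claim that it is controlled by $\int\int_{\mathrm{ann}}\eta|\nabla v|^2$ alone is incorrect, and hypothesis (iii) cannot save it. With the truncation, the annular term is $\le K^2\int_{\mathrm{ann}}\eta\,d\mu_t\le K^2 e^{-cR^2}\operatorname{vol}(B_{2R})$, which tends to zero exactly when the Gaussian decay rate beats the volume-growth constant $k$ -- this is where the volume growth hypothesis enters, a hypothesis your proof never invokes at all.

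Second, your tuning of $K$ is backwards. To use (iii) as a dominating integral you need $\eta\le e^{-\alpha_4 d^2}$, i.e.\ $1/(K(T+1-t))\ge\alpha_4$ for all $t$, which forces $K$ \emph{small} (and likewise the decay rate must exceed $k$ to beat volume growth); your condition $1/(K(T+1))<\alpha_4$ gives $\eta\ge e^{-\alpha_4 d^2}$, so (iii) dominates nothing. Third, your ``structural'' pointwise inequality $\partial_t\eta+\alpha_1|\nabla\eta|^2/\eta\le\Lambda\eta$ is false for large $d$: computing $\partial_t\eta$ produces, via (iv), a term of size $\alpha_5 d^2\eta/(K(T+1-t))$, which can only be absorbed by the good term $-d^2\eta/(K(T+1-t)^2)$ when $\alpha_5(T+1-t)\lesssim 1$; no enlargement of $K$ helps, since both terms scale the same way in $K$. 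These constraints ($K\gtrsim\alpha_1$ for the cross terms, but the weight decaying faster than both $\alpha_4$ and $k$, and the time horizon short relative to $1/\alpha_5$) are mutually incompatible on a single long interval, and that is precisely why the paper's proof works on a short subinterval of length $\eta<\min\bigl(T,\tfrac{1}{64K},\tfrac{1}{32\alpha_4},\tfrac{1}{4\alpha_5}\bigr)$ with the backward-heat-kernel weight $h=-\theta d_{g(t)}^2/(4(2\eta-t))$, $\theta=1/(4\alpha_1)$, and then covers $[0,T]$ by induction in time. Your single-pass Gronwall argument over all of $[0,T]$ cannot be repaired without reinstating both the truncation and the short-time iteration.
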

\begin{rem}\label{remark_max}
Clearly, the conditions (iii) and (iv) are satisfied if the sectional curvature of $g(t)$ and $\nabla v$ are uniformly bounded on $[0,T]$.
\end{rem}

\section{Proof of Theorem \ref{main_5} and Theorem \ref{main_1}}
In this section, by assuming Theorem \ref{A} holds, we give the proofs of Theorem \ref{main_5} and Theorem \ref{main_1}. The proof of Theorem \ref{A} will be presented in section \ref{section_5} and section \ref{section_Y_w}.

We first recall some basic formulas for Yamabe flow from \cite{chow} (see Lemmas 2.2 and
2.4 in \cite{chow}).

\begin{lem}\label{Chow}\cite{chow}  Let $g(t)$, $n\geq 3$, be the solution to the Yamabe flow (\ref{yamabe_flow_curvature}), then the scalar curvature of $g(t)$ evolves by
\begin{equation}\label{scalar}
\frac{\partial }{\partial t}R=(n-1)\Delta R+R^2.
 \end{equation}
 Moreover, if the initial metric $g_0$ is
locally conformally flat, then the evolution equation of the Ricci curvature is
\begin{equation}\label{ric}
\frac{\partial}{\partial t} R_{ij}=(n-1)\Delta R_{ij}+\frac{1}{n-2}B_{ij},
\end{equation}
where
$$
B_{ij}=(n-1)|Ric|^2g_{ij}+nRR_{ij}-n(n-1)R_{ij}^2-R^2g_{ij}.
$$
\end{lem}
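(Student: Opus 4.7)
The plan is to derive both identities from the standard first-variation formulas for curvature under a general metric deformation $\frac{\partial g_{ij}}{\partial t} = v_{ij}$, with $v_{ij} = -Rg_{ij}$ for the Yamabe flow.

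For the scalar curvature equation (\ref{scalar}), I recall the identity
\begin{equation*}
\frac{\partial R}{\partial t} = -\Delta(g^{ij}v_{ij}) + \nabla^i\nabla^j v_{ij} - v^{ij}R_{ij},
\end{equation*}
which the authors in fact already invoke in the proof of Lemma \ref{L1_decay}. Substituting $v_{ij} = -Rg_{ij}$: the trace is $g^{ij}v_{ij} = -nR$, the divergence gives $\nabla^j v_{ij} = -\nabla_i R$ so that $\nabla^i\nabla^j v_{ij} = -\Delta R$, and the pairing with Ricci contributes $v^{ij}R_{ij} = -R\,g^{ij}R_{ij} = -R^2$. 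Collecting these yields $\frac{\partial R}{\partial t} = n\Delta R - \Delta R + R^2 = (n-1)\Delta R + R^2$, as required. This first part is a short direct calculation.

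For the Ricci identity (\ref{ric}), the crucial input is that local conformal flatness is preserved along the Yamabe flow, because $g(t) = u^{4/(n-2)}g_0$ is conformal to $g_0$ and vanishing of the Weyl tensor is a conformal invariant. Hence for each $t$ the Riemann tensor is completely determined by Ricci and scalar curvature via the standard decomposition
\begin{equation*}
R_{ijkl} = \tfrac{1}{n-2}\bigl(R_{ik}g_{jl}-R_{il}g_{jk}+R_{jl}g_{ik}-R_{jk}g_{il}\bigr) - \tfrac{R}{(n-1)(n-2)}\bigl(g_{ik}g_{jl}-g_{il}g_{jk}\bigr).
\end{equation*}
Now apply the general first-variation formula for $R_{ij}$,
\begin{equation*}
\frac{\partial R_{ij}}{\partial t} = -\tfrac{1}{2}\bigl(\nabla^k\nabla_k v_{ij} + \nabla_i\nabla_j(g^{kl}v_{kl}) - \nabla^k\nabla_i v_{jk} - \nabla^k\nabla_j v_{ik}\bigr),
\end{equation*}
with $v_{ij} = -R g_{ij}$. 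The first two terms produce $\tfrac{1}{2}(\Delta R)g_{ij} + \tfrac{n}{2}\nabla_i\nabla_j R$, while the mixed terms require commuting derivatives and therefore generate Riemann curvature couplings of the form $R^k{}_{ij}{}^l\, v_{lk}$ and $R^k{}_{ik}{}^l v_{jl}$. These are the terms that, after substituting the locally conformally flat decomposition above, contribute the algebraic combination of $|Rc|^2 g_{ij}$, $RR_{ij}$, $R_{ij}^2$, and $R^2 g_{ij}$ appearing in $B_{ij}$.

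The main obstacle is precisely this last algebraic bookkeeping: expanding the commutators $[\nabla^k,\nabla_i]v_{jk}$, using the LCF decomposition of $R_{ijkl}$ to re-express every curvature factor in terms of Ricci and scalar, and then combining with the contracted second Bianchi identity $\nabla^k R_{ki} = \tfrac{1}{2}\nabla_i R$ to turn Hessians of $R$ into the Laplacian $\Delta R_{ij}$ (absorbing leftover $\nabla_i\nabla_j R$ terms into $(n-1)\Delta R_{ij}$ via the Bianchi-type identity valid in the LCF setting). After this careful cancellation the purely algebraic residue collapses to $\tfrac{1}{n-2}B_{ij}$, which gives (\ref{ric}). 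Since Chow \cite{chow} already records this computation, the proposal here is essentially to reproduce his derivation, and I would cite \cite{chow} for the detailed algebraic step rather than redo it in full.
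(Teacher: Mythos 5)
The paper does not actually prove this lemma: it is quoted verbatim from Chow's paper (Lemmas 2.2 and 2.4 of \cite{chow}), so any derivation you supply is by construction a different, more self-contained route. Your proof of (\ref{scalar}) is complete and correct, and it uses exactly the first-variation formula that the authors themselves invoke later in the proof of Lemma \ref{L1_decay}, so this part is a genuine improvement over a bare citation. For (\ref{ric}) you, like the paper, ultimately defer the algebra to \cite{chow}, which is a legitimate choice; your observation that local conformal flatness persists because the flow stays in the conformal class of $g_0$ is the correct starting point, though for $n=3$ you should appeal to conformal invariance of the Cotton tensor rather than the Weyl tensor, which vanishes identically in dimension three.

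One mechanism in your sketch of (\ref{ric}) is misattributed, and you should repair it before relying on the outline. For the Yamabe deformation $v_{ij}=-Rg_{ij}$ the mixed terms in the variation formula produce \emph{no} curvature couplings at all: since $v$ is a function multiple of the metric, $\nabla^k\nabla_i v_{jk}=-\nabla_j\nabla_i R$ holds exactly, with no commutation needed (equivalently, in the Lichnerowicz form the terms $2R_{ikjl}v^{kl}-R_i{}^k v_{kj}-R_j{}^k v_{ki}$ cancel identically for $v=-Rg$). The variation step therefore yields precisely
\begin{equation*}
\frac{\partial}{\partial t}R_{ij}=\frac{n-2}{2}\nabla_i\nabla_j R+\frac{1}{2}(\Delta R)\,g_{ij},
\end{equation*}
with no quadratic curvature residue, and this identity holds for the Yamabe flow on \emph{any} manifold. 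All of $B_{ij}$ is created in the second step, which you do gesture at correctly: local conformal flatness forces the Cotton tensor to vanish,
\begin{equation*}
\nabla_k R_{ij}-\nabla_j R_{ik}=\frac{1}{2(n-1)}\bigl(g_{ij}\nabla_k R-g_{ik}\nabla_j R\bigr),
\end{equation*}
and taking the divergence of this identity, commuting derivatives on the Ricci tensor (this is where Riemann factors genuinely enter, to be eliminated by the locally conformally flat decomposition you wrote down), and using $\nabla^k R_{ki}=\tfrac{1}{2}\nabla_i R$ gives
\begin{equation*}
\frac{n-2}{2}\nabla_i\nabla_j R=(n-1)\Delta R_{ij}-\frac{1}{2}(\Delta R)\,g_{ij}+\text{(quadratic curvature terms)},
\end{equation*}
so that the $(\Delta R)\,g_{ij}$ contributions cancel and the quadratic terms assemble into $\tfrac{1}{n-2}B_{ij}$. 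With that correction your outline matches Chow's actual derivation; as written, anyone executing your plan would search for commutator terms in the variation step and find none.
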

\begin{rem}
In view of (\ref{ric}), the Ricci curvature evolves as a parabolic equation only if the initial metric  $g_0$ is
locally conformally flat. So the Yamabe flow can not smooth out the metric as the Ricci flow in general.
\end{rem}

As an immediate application to (\ref{scalar}) in Theorem \ref{Chow} and maximum principle (see Corollary 7.43 in \cite{CLN}),the following corollary holds.
\begin{cor}\label{positive}
Let $g(t)$ be the fine solution to Yamabe flow (\ref{yamabe_flow_curvature}) on an $n$-dimensional asymptotically flat manifold $(M^n,g_0)$ of order $\tau>0$. If $R_{g_0}\geq 0$, then $R(g(t))\geq 0$ for $0\leq t<t_{max}$.
\end{cor}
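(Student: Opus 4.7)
The plan is to apply a noncompact maximum principle directly to the scalar curvature evolution equation~(\ref{scalar}). Set $v=-R$. Using $\partial_t R=(n-1)\Delta R+R^2$ from Lemma~\ref{Chow}, one computes
\begin{align*}
\partial_t v-(n-1)\Delta v=-R^2=R\cdot v,
\end{align*}
so $v$ satisfies the differential inequality $\partial_t v-\operatorname{div}(a\nabla v)\leq b\cdot\nabla v+cv$ of Theorem~\ref{Ecker_Huisken} with $a=n-1$, $b=0$, and $c=R$. The initial condition $v(\cdot,0)=-R_{g_0}\leq 0$ is immediate from the hypothesis $R_{g_0}\geq 0$, so if the maximum principle applies on $[0,T]$ for each $T<t_{max}$, then $v\leq 0$ and hence $R(g(t))\geq 0$ throughout $[0,t_{max})$.

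The work thus lies in checking the technical assumptions of Theorem~\ref{Ecker_Huisken}. Because $u(x,t)$ is a \emph{fine} solution, $|\operatorname{Rm}(g)|\leq C$ on $M\times[0,T]$ for any $T<t_{max}$, which gives $|c|=|R|\leq C$, so the coefficient bounds hold. The bounded variation assumption follows from $|\partial_t g|=|Rg|\leq C$ since both $R$ and $u$ (and hence $g=u^{4/(n-2)}g_0$) are uniformly bounded above and below on $[0,T]$. The uniform volume growth condition $\operatorname{vol}_{g(t)}(B_{g(t)}(p,r))\leq\exp(k(1+r^2))$ follows from asymptotic flatness, which yields polynomial volume growth that is controlled by any exponential bound; here one uses that $g(t)$ and $g_0$ are uniformly equivalent and that $(M,g_0)$ has Euclidean volume growth at infinity.

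The main obstacle is verifying the weighted gradient growth condition
\begin{align*}
\int_0^T\Bigl(\int_M\exp[-\alpha_4 d_{g(t)}(p,y)^2]\,|\nabla v|^2\,d\mu_t\Bigr)dt<\infty.
\end{align*}
By Remark~\ref{remark_max}, it suffices to have $|\nabla R|$ uniformly bounded on $M\times[0,T]$. This is precisely the content of the Bernstein--Bando--Shi type gradient estimate for the Yamabe flow that the authors announce at the start of Section~3; granting that estimate (which is a standard parabolic regularization argument applied to the evolution of $|\nabla R|^2$ and only uses the bounded curvature of the fine solution), all four hypotheses of Theorem~\ref{Ecker_Huisken} are satisfied. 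Applying the maximum principle on each interval $[0,T]$ with $T<t_{max}$ then gives $R(g(t))\geq 0$, completing the proof.
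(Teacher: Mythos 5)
Your overall strategy -- a noncompact maximum principle applied to the evolution equation (\ref{scalar}) with $v=-R$ -- is the same idea as the paper's, which disposes of the corollary in one line by citing the maximum principle of Corollary 7.43 in \cite{CLN}: that result needs only that the metrics $g(t)$ have bounded curvature and that the solution $R$ be bounded, both of which are part of the definition of a fine solution, and it needs no gradient information at all. By routing the argument through Theorem \ref{Ecker_Huisken} instead, you take on the obligation of verifying the growth condition (iii), and that is where your proof has a genuine gap: the claim that uniform boundedness of $|\nabla R|$ on $M\times[0,T]$ ``is precisely the content of'' the Bernstein--Bando--Shi estimate is not correct.

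A Shi-type estimate is a smoothing estimate: in its correct form (this is the form of Lemmas 6.19--6.20 in \cite{CLN}, to which the proof of Theorem \ref{gradient_estimates_for_R} defers; the $1/\tau$ there must be read as $1/t$, since no hypothesis of that theorem controls $|\nabla R|$ at $t=0$) it gives $|\nabla R(x,t)|\leq C(n)K(\tfrac{1}{r^2}+\tfrac{1}{t}+K)^{1/2}$, which necessarily degenerates as $t\to 0^{+}$. It cannot do better: a fine solution bounds $|Rm|$ but not $|\nabla Rm|$, and under Definition \ref{AE_def} the initial metric has only $C^{2+\alpha}_{-\tau}$ control, so $|\nabla R_{g_0}|$ (involving third derivatives of $g_0$) need not be bounded on the end. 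Hence the best pointwise input for (iii) is $|\nabla v|^2=O(1/t)$, and the resulting bound on the space-time integral, $\int_0^T \tfrac{C}{t}\bigl(\int_M e^{-\alpha_4 d_{g(t)}(p,y)^2}d\mu_t\bigr)dt$, diverges logarithmically, so (iii) is not verified. You also cannot sidestep this by starting the maximum principle at time $\epsilon>0$ (where the gradient is bounded), because the initial condition there would require $R(\cdot,\epsilon)\geq 0$, which is what you are trying to prove. The gap is repairable within your framework, since (iii) is an integral condition: multiplying the identity $\partial_t R^2=(n-1)(\Delta R^2-2|\nabla R|^2)+2R^3$ by a Gaussian-weighted cutoff and integrating by parts gives a Caccioppoli-type bound on $\int_0^T\int_M e^{-\alpha_4 d^2}|\nabla R|^2 d\mu_t\,dt$ using only $\sup|R|<\infty$, the volume growth, and (iv). Either insert that energy estimate in place of the pointwise gradient bound, or simply invoke the noncompact maximum principle the paper cites.
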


Before presenting the proof of Theorem \ref{main_5}, we need the following Bernstein-Bando-Shi type gradient estimate for the Yamabe flow.
\begin{thm}\label{gradient_estimates_for_R}
Let $g(t)$ be the solution to Yamabe flow (\ref{yamabe_flow_curvature}) on an $n$-dimensional manifold $(M^n,g_0)$.
Let $K<\infty$ be positive constants. For each $r>0$, there is a constant
$C(n)$ such that
if
\begin{align}\label{gradient_estimates_for_R1}
    |Rm(x,t)|\leq K \ \text{for}\ (x,t)\in B_{g_0}(p,r)\times [0,\tau],
\end{align}
then
\begin{align*}
    |\nabla R(x,t)|\leq C(n)K(\frac{1}{r^2}+\frac{1}{\tau}+K)^{\frac{1}{2}}
\end{align*}
for all $x\in B_{g_0}(p,\frac{r}{2})$ and $t\in (0,\tau]$.
\end{thm}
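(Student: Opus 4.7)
The plan is to prove Theorem~\ref{gradient_estimates_for_R} by a Bernstein--Bando--Shi type argument: combine the evolution equations of $|\nabla R|^2$ and $R^2$ under the Yamabe flow with a spatial cutoff, and apply the parabolic maximum principle on the parabolic cylinder $B_{g_0}(p,r)\times[0,\tau]$. The hypothesis $|Rm|\leq K$ gives $|R|,|Rc|\leq C(n)K$ pointwise, and since $\partial_t g = -Rg$ one has $|\partial_t g|_{g}\leq C(n)K$, so $g(t)$ stays uniformly equivalent to $g_0$ on $[0,\tau]$. This equivalence is what lets me work with a spatial cutoff built from the $g_0$-distance.

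First I would derive the two parabolic identities that drive the argument. Starting from $\partial_t R = (n-1)\Delta R + R^2$ of Lemma~\ref{Chow} and the identity $2R\Delta R = \Delta R^2 - 2|\nabla R|^2$, a direct calculation gives
\[
(\partial_t - (n-1)\Delta) R^2 = -2(n-1)|\nabla R|^2 + 2R^3.
\]
For $|\nabla R|^2$, using $\partial_t g^{ij} = R g^{ij}$ and the Bochner formula $\Delta|\nabla R|^2 = 2|\nabla^2 R|^2 + 2\langle\nabla R,\nabla\Delta R\rangle + 2Rc(\nabla R,\nabla R)$, a parallel computation yields
\[
(\partial_t - (n-1)\Delta) |\nabla R|^2 \leq -2(n-1)|\nabla^2 R|^2 + C(n)K|\nabla R|^2.
\]
The decisive structural feature is that $R^2$ carries a negative $-2(n-1)|\nabla R|^2$ term on its right hand side, which can be used to absorb the positive $C(n)K|\nabla R|^2$ appearing in the second identity.

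Next I would form the auxiliary quantity
$F = \eta^2 \bigl( t|\nabla R|^2 + A R^2 \bigr)$,
where $\eta(x) = \phi(d_{g_0}(p,x)/r)$ is a smooth cutoff with $\eta\equiv 1$ on $B_{g_0}(p,r/2)$, supported in $B_{g_0}(p,r)$, and satisfying $|\nabla\eta|_{g(t)}^2 + |\Delta_{g(t)}\eta|\leq C(n)/r^2$, and the constant $A$ is chosen so that $2(n-1)A \geq 1 + C(n)K\tau$. With this choice the $|\nabla R|^2$ contributions in the evolution of the inner bracket cancel and leave only an $O((1+K\tau)K^3)$ error coming from $2AR^3$. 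The standard maximum principle argument then applies: at a spacetime maximum of $F$ the relations $\nabla F = 0$ and $\Delta F\leq 0$ reduce the cutoff cross terms to a bound of the form $C(n)r^{-2}F$, and combining with the bound on $(\partial_t - (n-1)\Delta)F$ produces
$F_{\max} \leq C(n)K^2\bigl(1 + K\tau + \tau/r^2\bigr)$.
Restricting to $B_{g_0}(p,r/2)$ where $\eta\equiv 1$, dividing by $t$ and taking square roots gives the claimed inequality.

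The step I expect to be the main obstacle is the cutoff bookkeeping: producing precisely the dependence $1/r^2 + 1/\tau + K$ without picking up spurious multiplicative $K\tau$ factors requires carefully weighted Cauchy--Schwarz splits when absorbing the $\nabla\eta\cdot\nabla G$ and $\Delta\eta\cdot G$ contributions, and one must be mindful that the cutoff is built from $d_{g_0}$ while the differential operators act with respect to $g(t)$, so the uniform equivalence of the two metrics on $[0,\tau]$ must be used explicitly to control $|\nabla\eta|_{g(t)}$ and $|\Delta_{g(t)}\eta|$.
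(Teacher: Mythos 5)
Your evolution equations for $R^2$ and $|\nabla R|^2$ are exactly the ones the paper derives, but your localization step contains a genuine gap, and it is precisely the point where the paper's proof is organized differently. At a spacetime maximum $(x_0,t_0)$ of $F=\eta^2H$ with $H=t|\nabla R|^2+AR^2$, the condition $\nabla F=0$ gives $\nabla H=-2H\nabla\eta/\eta$, so the cutoff contributions $-(n-1)\Delta(\eta^2)H-2(n-1)\nabla(\eta^2)\cdot\nabla H$ are bounded by $C(n)r^{-2}H=C(n)r^{-2}F/\eta^{2}$, \emph{not} by $C(n)r^{-2}F$ as you assert. Since your good term $-\eta^{2}|\nabla R|^{2}$ is only linear in $H$ and carries the factor $\eta^{2}$, which may be arbitrarily small exactly at the maximum point, the resulting inequality $\eta^{2}|\nabla R|^{2}\le 2AK^{3}+C(n)r^{-2}\bigl(t_{0}|\nabla R|^{2}+AK^{2}\bigr)$ cannot be closed: the term $C(n)r^{-2}t_{0}|\nabla R|^{2}$ has no factor of $\eta^{2}$ in front, and no fixed choice of $A$ (your $A\sim 1+K\tau$ included) absorbs it, because nothing prevents the maximum from sitting where $\eta^{2}<C(n)\tau/(A r^{2})$. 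This is not bookkeeping; an additive Bernstein quantity with the cutoff multiplying both terms is simply not localizable by the pointwise maximum principle.

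The paper avoids this by the multiplicative Shi quantity $(16K^{2}+R^{2})|\nabla R|^{2}$: after Cauchy--Schwarz absorptions, the normalized quantity $G$ satisfies the Riccati inequality $(\partial_t-(n-1)\Delta)G\le -G^{2}+K^{2}$, and the quadratic term $-G^{2}$ is exactly what makes the cutoff argument close (at a maximum of $\phi G$ one multiplies through by $\phi$ and solves a quadratic inequality for $\phi G$; the Riccati structure also yields a bound independent of the initial data, which is where the $1/t$, hence $1/\tau$, factor comes from). The paper then cites Lemmas 6.19 and 6.20 of \cite{CLN} for this step rather than redoing the cutoff analysis. If you want to keep the additive route, the standard repair is: (a) cut off only the gradient term, i.e.\ take $F=\eta^{2}t|\nabla R|^{2}+AR^{2}$ with $A\sim 1+K\tau+\tau/r^{2}$, so that $(\partial_t-(n-1)\Delta)F\le C(n)AK^{3}$ holds everywhere on the cylinder and the weak maximum principle applies, the parabolic boundary values being at most $AK^{2}$ since $R^2$ is not cut off; and (b) remove the resulting spurious $K\tau$ and $\tau/r^{2}$ factors by running the argument on time intervals of length $\min\{\tau,1/K,r^{2}\}$ and translating in time. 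Finally, uniform equivalence of $g(t)$ and $g_{0}$ controls $|\nabla\eta|_{g(t)}$ but \emph{not} $|\Delta_{g(t)}\eta|$, which involves the Christoffel symbols of $g(t)$; bounding it requires Laplacian comparison for the evolving metric (using $Rc\ge -C(n)Kg$ from the curvature hypothesis) or an equivalent device, a point your sketch passes over.
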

\begin{proof}
We just proceed as W.X.Shi's gradient estimate (\cite{S892}, \cite{S89}) of Ricci flow.
From (\ref{scalar})
we obtain
\begin{align*}
    \frac{\partial}{\partial t} R^2&=(n-1)(\Delta R^2-2|\nabla R|^2)+2R^3,
\end{align*}
and
\begin{align*}
    \frac{\partial}{\partial t}|\nabla R|^2&=(n-1)(\Delta |\nabla R|^2-2|\nabla^2 R|^2)+5R|\nabla R|^2-2(n-1)Rc(\nabla R,\nabla R)\\
    &\leq (n-1)(\Delta |\nabla R|^2-2|\nabla^2 R|^2)+C(n)K|\nabla R|^2,
\end{align*}
where $C(n)$ depends only on $n$ and $Rc$ denotes the Ricci curvature.
Then we compute
\begin{align*}
    &\quad(\frac{\partial}{\partial t}-(n-1)\Delta)((16K^2+R^2)|\nabla R|^2)\\
&\leq |\nabla R|^2(2R^3-2(n-1)|\nabla R|^2)\\
&\quad+(16K^2+R^2)(-2(n-1)|\nabla^2 R|^2+C(n)K|\nabla R|^2))\\
&\quad+8(n-1)|R||\nabla R|^2|\nabla^2R|\\
    &\leq -2(n-1)|\nabla R|^4+2K^3|\nabla R|^2-32(n-1)K^2|\nabla^2 R|^2\\
&\quad+17C(n)K^3|\nabla R|^2+8(n-1)K|\nabla R|^2|\nabla^2R|.
\end{align*}
Since
$$
-\frac{1}{2}|\nabla R|^4+8K|\nabla R|^2|\nabla^2 R|-32K^2|\nabla^2R|^2\leq 0,
$$
and
$$
-\frac{1}{2}|\nabla R|^4+(2+17C(n)K^3)|\nabla R|^2\leq C'(n)K^6,
$$
we have
\begin{align*}
    (\frac{\partial}{\partial t}-(n-1)\Delta)((16K^2+R^2)|\nabla R|^4)\leq -|\nabla R|^4+C'(n)K^6.
\end{align*}
Taking $G=\min\{\frac{1}{289},\frac{1}{C'(n)}\}\frac{(16K^2+R^2)|\nabla R|^2}{K^4}$, we get
\begin{align}\label{gradient_es}
    (\frac{\partial}{\partial t}-(n-1)\Delta)G\leq -G^2+K^2.
\end{align}
Note that (\ref{gradient_es}) shows that it just the same situation as which has been studied in W.X.Shi's gradient estimate for Ricci flow. So we omit the details here, and one may see
 Lemma 6.19 and Lemma 6.20 in \cite{CLN} for the proof.
\end{proof}
\begin{rem}
If the initial metric $g_0$ is locally conformally flat and the solution to Yamabe flow on $(M^n,g_0)$ satisfies the assumptions of Theorem \ref{gradient_estimates_for_R}, by applying the similar methods in the proof of Theorem \ref{gradient_estimates_for_R} to (\ref{ric}), one have
\begin{align*}
    |\nabla Rc(x,t)|\leq C(n)K(\frac{1}{r^2}+\frac{1}{\tau}+K)^{\frac{1}{2}}
\end{align*}
for all $x\in B_{g_0}(p,\frac{r}{2})$ and $t\in (0,\tau]$ (see \cite{MC}).
\end{rem}

Finally, we give the proofs of Theorem \ref{main_5} and Theorem \ref{main_1}.

\textbf{Proof of Theorem \ref{main_5}:}
 Denote $m(g,\{x_i\})$ be the ADM mass defined in (\ref{ADM_Mass}) with metric $g$ and asymptotic coordinates $\{x_i\}$.
 We calculate that
 \begin{align*}
 \frac{d}{dt}m(g(t),\{x_i\})&=\lim\limits_{r\to\infty}\frac{1}{4\omega}\int_{S_r}((Rg_{jj})_{,i}-(Rg_{ij})_{,j})dS^i\\
 &=\lim\limits_{r\to\infty}\frac{1}{4\omega}\int_{S_r}R(g_{jj,i}-g_{ij,j})dS^i\\
 &\qquad+ \lim\limits_{r\to\infty}\frac{1}{4\omega}\int_{S_r}(R_{,i}g_{jj}-R_{,j}g_{ij})dS^i
 \end{align*}
 By Theorem \ref{A}, $g(t)$ satisfies the asymptotic conditions (\ref{AE}) of order $\tau>\frac{n-2}{2}$ for $0\leq t< t_{max}$. Then $g_{ij}(\cdot,t)=\delta_{ij}+O(r^{-\tau})$, $g_{ij,k}(\cdot,t)=O(r^{-\tau-1})$ and $|Rm(g(t))|=O(r^{-\tau-2})$ for $\tau>\frac{n-2}{2}$, $0\leq t< t_{max}$. So $\lim\limits_{r\to\infty}\frac{1}{4\omega}\int_{S_r}R(g_{jj,i}-g_{ij,j})dS^i=0$.
 Note  that $|\nabla R|=O(r^{-\tau-2})$ for  $0\leq t< t_{max}$ by Theorem \ref{gradient_estimates_for_R}.
 It follows that
  \begin{align*}
 \frac{d}{dt}m(g(t),\{x_i\})&= \lim\limits_{r\to\infty}\frac{n-1}{4\omega}\int_{S_r}R_{,i}dS^i+ \lim\limits_{r\to\infty}\frac{1}{4\omega}\int_{S_r}R_{,i}O(r^{-\tau})dS^i\\
  &=\lim\limits_{r\to\infty}\frac{n-1}{4\omega}\int_{S_r}R_{,i}dS^i.
 \end{align*}
 Then we have
$ \frac{d}{dt}m(g(t),\{x_i\})=0$ when $\tau>\max\{\frac{n-2}{2},n-3\}$ (i.e. $\tau>\frac{n-2}{2}$ when $n=3,4$ or $\tau>n-3$ when $n>4$).
Since $R(g(t))=O(r^{-\tau-2})$ for $\tau>\frac{n-2}{2}$, $0\leq t< t_{max}$, we get $R^2(g(t))$ is integrable.
It follows from Theorem \ref{Einstein_Variation} that
\begin{align}
\frac{d }{d t}\int_{M}Rdvol_{g(t)}=(1-\frac{n}{2})\int_{M}R^2dvol_{g(t)}\leq 0.
\end{align}
Then $R(g(t)) \in L^1(M)$ and the Yamabe flow (\ref{yamabe_flow_curvature}) is the gradient flow of Einstein-Hilbert functional.
Hence the ADM mass is well-defined on $[0,t_{max})$ by Theorem \ref{A}, Theorem \ref{Bartnik} and Corollary \ref{positive}.
 $\Box$

\textbf{Proof of Theorem \ref{main_1}:}
Recall the solution $u(x,t)$ to Yamabe flow (\ref{yamabe_flow_u}) in Theorem \ref{short_existence} is obtained, see \cite{CZ} and \cite{AM}, by a sequence of approximation solutions
$u_m(x,t)$ which solve the following Dirichlet problem for a sequence of exhausting bounded smooth domains
where $\Omega_1\subset\Omega_2\subset\cdots$:
\begin{equation}\label{yamabe_flow_u_local1}
\left\{
\begin{array}{ll}
         \frac{\partial u^N_m}{\partial t}=L_{g_0}u_m, \quad &\ x\in\Omega_m, \ t>0,\\
                   u_m(x,t)>0, \quad &\ x\in\Omega_m, \ t>0,\\
                   u_m(x,t)=1, \quad &\ x\in \partial \Omega_m, \ t>0,\\
                  u_m(\cdot,0)=1, \quad &\ x\in\Omega_m,\\
\end{array}
\right.
\end{equation}
where $\Omega_1\subset\Omega_2\subset\cdots$, $N=\frac{n+2}{n-2}$, $L_{g_0}u_m=\Delta_{g_0}u_m-aR_{g_0}u_m$ and
$a=\frac{n-2}{4(n-1)}$.
Without lose of generality, we may assume $\Omega_m=B(o,m)$, where $B(o,m)$ is the Euclidean ball with radius $m$ centered at $o$. Fix time $t_0>0$ and set $g_m(t)=u_m^{\frac{4}{n-2}}g_0$. We also consider
the Dirichlet problem that
\begin{equation}
\left\{
\begin{array}{ll}
         \frac{\partial g_m}{\partial t}=-Rg_m, \quad &\ x\in\Omega_m, \ t>t_0,\\
                  g_m(t)=g_m(t_0), \quad &\ x\in\partial\Omega_m, t>t_0,
\end{array}
\right.
\end{equation}
where $g_m(x,t)=\widetilde{u}_m^{\frac{4}{n-2}}(x,t)g_m(x,t_0)$. By the uniqueness of the Dirichlet problem,
we have $\widetilde{u}_m(x,t)=\frac{u_{m}(x,t)}{u_{m}(x,t_0)}$. Note that $\widetilde{u}_m$ satisfies the following equation
\begin{equation}
\left\{
\begin{array}{ll}
         \frac{\partial \widetilde{u}^N_m}{\partial t}=L_{g_m(t_0)}\widetilde{u}_m, \quad &\ x\in\Omega_m, \ t>t_0,\\
                   \widetilde{u}_m(x,t)>0, \quad &\ x\in\Omega_m, \ t>t_0,\\
                  \widetilde{ u}_m(x,t)=1, \quad &\ x\in \partial \Omega_m, \ t>t_0,\\
                  \widetilde{u}_m(\cdot,t_0)=1, \quad &\ x\in\Omega_m,\\
\end{array}
\right.
\end{equation}
By the maximal principle and $R_{g_0}\geq 0$ that $\widetilde{u}_m(x,t)\leq 1$ on $\Omega_m$ for $t\geq t_0$. Since $\widetilde{u}_m=1$ on $\partial \Omega_m$, we deduce that
$\frac{\partial \widetilde{u}_m}{\partial \nu}\geq 0$ on $\partial \Omega_m$, where $\nu$ is the outer unit normal vector with respect to Euclidean metric.
We denote $(\xi_m(t))_i=(g_m(t))_{ij,j}-(g_m(t))_{jj,i}$, $<,>$ denotes the Euclidean inner product, $S_m=\partial B(o,m)$ and $dS_m$ is the volume element of $S_m$ with respect to the Euclidean metric.
We calculate
\begin{align}\label{mass_f_111}
   &\qquad\frac{1}{4\omega}\int_{S_m}<\xi_m(t),\nu>dS_m\nonumber\\
&=\frac{1}{4\omega}\int_{S_m}\widetilde{u}_m(x,t)^{\frac{4}{n-2}}<\xi_m(t_0),\nu>dS_m\nonumber\\
        &\quad+\frac{1}{(n-2)\omega}\int_{S_m}\widetilde{u}_m(x,t)^{\frac{6-n}{n-2}}(\widetilde{u}_m(x,t)_{,j}(g_m)_{ij}(t_0)-\widetilde{u}_m(x,t)_{,i}g(t_0)_{jj})\nu^idS_m\nonumber\\
        &=\frac{1}{4\omega}\int_{S_m}\widetilde{u}_m(x,t)^{\frac{4}{n-2}}<\xi_m(t_0),\nu>dS_m\nonumber\\
        &\quad+\frac{1}{(n-2)\omega}\int_{S_m}\widetilde{u}_m(x,t)^{\frac{6-n}{n-2}}((n-1)\widetilde{u}_m(x,t)_{,i}+\widetilde{u}_m(x,t)_{,j}(g_m)_{ij}(t_0)\nonumber\\
        &\quad-\widetilde{u}_m(x,t)_{,i}g(t_0)_{jj})\nu^idS_m-\frac{1}{(n-2)\omega}\int_{S_m}(n-1)\widetilde{u}_m(x,t)^{\frac{6-n}{n-2}}\frac{\partial \widetilde{u}_m(x,t)}{\partial \nu}dS_m\nonumber\\
        &\leq\frac{1}{4\omega}\int_{S_m}\widetilde{u}_m(x,t)^{\frac{4}{n-2}}<\xi_m(t_0),\nu>dS_m\nonumber\\
        &\quad+\frac{1}{(n-2)\omega}\int_{S_m}\widetilde{u}_m(x,t)^{\frac{6-n}{n-2}}((n-1)\widetilde{u}_m(x,t)_{,i}+\widetilde{u}_m(x,t)_{,j}(g_m)_{ij}(t_0)\nonumber\\
        &\quad-\widetilde{u}_m(x,t)_{,i}g(t_0)_{jj})\nu^idS_m
         \end{align}
Note that $0<\delta\leq|u(x,t)|\leq C$ and $\sup\limits_{M^n\times [0,T]}|\nabla_{g_0} u(x,t)|\leq C$ on time interval $[0,T]$.
By Theorem \ref{A}, we know that $u(x,t)\to 1$ as $r\to\infty$ and $u(x,t)_{,i}=O(r^{-(\tau+1)})$ for $\tau>\frac{n-2}{2}$. Since $\widetilde{u}_m(x,t)=\frac{u_{m}(x,t)}{u_{m}(x,t_0)}$, we have $\widetilde{u}(x,t)=\frac{u(x,t)}{u(x,t_0)}$ by letting $m\to\infty$. Then we have that $\widetilde{u}(x,t)\to 1$ as $r\to\infty$ and $\widetilde{u}(x,t)_{,i}=O(r^{-(\tau+1)})$ for $\tau>\frac{n-2}{2}$.
Then taking $m\to\infty$ in (\ref{mass_f_111}), we conclude that
\begin{align*}
       m(g(t),\{x_i\}) &\leq m(g(t_0),\{x_i\})+\frac{1}{(n-2)\omega}\lim\limits_{m\to\infty}\int_{S_m}\widetilde{u}(x,t)^{\frac{6-n}{n-2}}((n-1)\widetilde{u}(x,t)_{,i}\nonumber\\
        &\quad+\widetilde{u}(x,t)_{,j}(g(t_0))_{ij}-\widetilde{u}(x,t)_{,i}g(t_0)_{jj})\nu^idS_m\\
        &=m(g(t_0),\{x_i\})+\frac{1}{(n-2)\omega}\lim\limits_{m\to\infty}\int_{S_m}\widetilde{u}(x,t)^{\frac{6-n}{n-2}}((n-1)\widetilde{u}(x,t)_{,i}\nonumber\\
        &\quad+\widetilde{u}(x,t)_{,j}(\delta_{ij}+O(r^{-\tau}))-\widetilde{u}(x,t)_{,i}(n+O(r^{-\tau}))\nu^idS_m\\
        &=m(g(t_0),\{x_i\})+\frac{1}{(n-2)\omega}\lim\limits_{m\to\infty}\int_{S_m}\widetilde{u}(x,t)^{\frac{6-n}{n-2}}\widetilde{u}(x,t)_{,i}\nu^iO(r^{-\tau})dS_m\\
        &=m(g(t_0),\{x_i\}).
\end{align*}
Since $R(g(t))=O(r^{-(2+\tau)})$ for $\tau>\frac{n-2}{2}$ by Theorem \ref{A}, we get $R^2(g(t))$ is integrable for $0\leq t<t_{max}$.
It follows from Theorem \ref{Einstein_Variation} that
\begin{align}
\frac{d }{d t}\int_{M}Rdvol_{g(t)}\leq (1-\frac{n}{2})\int_{M}R^2dvol_{g(t)}\leq 0.
\end{align}
Then $R(g(t)) \in L^1(M)$  for $0\leq t< t_{max}$.
Hence the ADM mass  is well-defined on $[0,t_{max})$ by Theorem \ref{A}, Theorem \ref{Bartnik} and Corollary \ref{positive}.
$\Box$

\section{Weighted Spaces for parabolic operators on Asymptotically flat manifolds}\label{section_5}
The theory of  the weighted spaces for elliptic operators on asymptotically flat manifolds  was first introduced by Nirenberg
and Walker \cite{NW}, and has been studied by many mathematicians such as Lockhart \cite{Lo1},
McOwen \cite{M}, Cantor \cite{C}, Bartnik \cite{B} and others.
In the section 1 of \cite{B}, R.Bartnik obtained the global estimates of the weighted spaces for elliptic operators (see Definition \ref{elliptic_wss}) close to Laplacian by using scaling arguments. Inspired by Bartnik's work, we introduce the following definitions of weighted spaces for parabolic operators on asymptotically flat manifolds.

\begin{defn}\label{parabolic_wss}
Suppose $(M^n,g)$ is an $n$-dimensional asymptotically flat manifold with asymptotic coordinates $\{x^i\}$. Denote $D^j_x v=\sup\limits_{|\alpha|=j}|\frac{\partial^{|\alpha|}}{\partial x_{i_1}\cdots\partial x_{i_j}}v|$.
Let $r(x)=|x|$ on $M_{\infty}$ and extend $r$ to a smooth positive function on all of $M^n$.
For parabolic domain $Q_T=M^n\times [0,T]$, $q\geq 1$ and $\beta\in\mathbb{R}$,
the weighted Lebesgue space $L^q_{\beta}(Q_T)$ is defined as the set of locally integrable functions $v$ for which the norm
\begin{align}\label{eq_aaaaaaaaaaaaaaa}
 ||v||_{L^q_\beta(Q_T)}=\left\{
                      \begin{array}{ll}
                        (\int^T_0\int_{M}|v|^q r^{-\beta q-n}dxdt)^{\frac{1}{q}}, & \hbox{$q<\infty$;} \\
                        ess \sup\limits_{Q_T} (r^{-\beta}|v|), & \hbox{$q=\infty$.}
                      \end{array}
                    \right.
\end{align}
is finite. For an nonnegative even integer $k$,  the weighted Sobolev space $W^{k,k/2,q}_\beta(Q_T)$ is defined as the set of functions $v$ for which the norm
$$
||v||_{W^{k,k/2,q}_\beta(Q_T)}=\sum\limits_{i+2j\leq k}||D^i_xD^j_t v||_{L^q_{\beta-i-2j}(Q_T)}.
$$
is finite.
For a nonnegative integer $k$, the weighted $C^k$ space $C^k_{\beta}(Q_T)$ is defined as the set of $C^k$ functions $v$ for which the norm
$$
||v||_{C^k_\beta(Q_T)}=\sum\limits_{i+2j\leq k}\sup\limits_{Q_T} r^{-\beta+i+2j}|D^i_x D^j_tv|
$$
is finite. Moreover, we define
\begin{align*}
&\ \ [v]_{C^{k+\alpha}_\beta(Q_T)}\\
&=\sum\limits_{i+2j= k} \sup\limits_{(x,t)\neq (y,s)\in Q_T}\min(r(x),r(y))^{-\beta+i+2j+\alpha}\frac{|D^i_xD^j_tv(x,t)-D^i_xD^j_tv(y,s)|}{\delta((x,t),(y,s))^{\alpha}},
\end{align*}
where $\delta((x,t),(y,s))=|x-y|+|t-s|^{\frac{1}{2}}$
and
\begin{align*}
&\ \ <v>_{C^{k+\alpha}_\beta(Q_T)}\\
=&\sum\limits_{i+2j= k-1} \sup\limits_{(x,t)\neq (y,s)\in Q_T}r(x)^{-\beta+i+2j+\alpha+1}\frac{|D^i_xD^j_tv(x,t)-D^i_xD^j_tv(x,s)|}{|t-s|^{\frac{\alpha+1}{2}}}
\end{align*}
for $k\geq 1$.
Then the weighted H\"{o}lder space $C^{k+\alpha,(k+\alpha)/2}_{\beta}(Q_T)$ is defined as the set of functions $v$ for which the norm
$$
||v||_{C^{k+\alpha,(k+\alpha)/2}_\beta(Q_T)}=||v||_{C^{k}_\beta(Q_T)}+[v]_{C^{k+\alpha}_\beta(Q_T)}+<v>_{C^{k+\alpha}_\beta(Q_T)}
$$
is finite.
\end{defn}

\begin{rem}\label{parabolic_wss_rem}
Consider the rescaled function
$v_R(x,t)=v(Rx,R^2t)$, where $R$ denotes a positive constant.
Let $y=Rx$, $\bar{t}=R^2 t$. Denote $A_r=B(o,2R)\backslash B(o,R)$ be the annulus on $\mathbb{R}^n$ and $\Delta_0$ be the stand Laplacian with flat metric on $\mathbb{R}^n$.
By a simple change of variables, we have
$$||D_x^j v_R||_{L^p_{\beta-j}(A_1\times [0,T])}=R^{\beta-\frac{2}{p}}||D_x^jv||_{L^p_{\beta-j}(A_R\times [0,R^2T])},$$
$$||\frac{\partial}{\partial t} v_R||_{L^p_{\beta-2}(A_1\times [0,T])}=R^{\beta-\frac{2}{p}}||\frac{\partial}{\partial \bar{t}}v||_{L^p_{\beta-2}(A_R\times [0,R^2T])},
$$
and
$$
||v_R||_{C^{k+\alpha,(k+\alpha)/2}_\beta(A_1\times [0,T])}=R^{\beta}||v||_{C^{k+\alpha,(k+\alpha)/2}_\beta(A_R\times [0,R^2T])}.
$$
\end{rem}

We also use the following different weighted spaces for parabolic operators which have no weights on the time-derivative terms.

\begin{defn}\label{parabolic_wss_1}
Suppose $(M^n,g)$ is an $n$-dimensional asymptotically flat manifold with asymptotic coordinates $\{x^i\}$. Denote $D^j_x v=\sup\limits_{|\alpha|=j}|\frac{\partial^{|\alpha|}}{\partial x_{i_1}\cdots\partial x_{i_j}}v|$.
Let $r(x)=|x|$ on $M_{\infty}$ and extend $r$ to a smooth positive function on all of $M^n$.
For parabolic domain $Q_T=M^n\times [0,T]$, $q\geq 1$ and $\beta\in\mathbb{R}$,
we also define weighted Sobolev space $\widetilde{W}^{k,k/2,q}_\beta(Q_T)$ as the set of functions $v$ for which the norm
$$
||v||_{\widetilde{W}^{k,k/2,q}_\beta(Q_T)}=\sum\limits_{i+2j\leq k}||D^i_xD^j_t v||_{L^q_{\beta-i}(Q_T)}.
$$
is finite, where $|| \cdot||_{L^q_{\beta-i}(Q_T)}$ is the norm defined in (\ref{eq_aaaaaaaaaaaaaaa}).
 The weighted $\widetilde{C}^k$ space $\widetilde{C}^k_{\beta}(Q_T)$ is defined as the set of $C^k$ functions $v$ for which the norm
$$
||v||_{\widetilde{C}^k_\beta(Q_T)}=\sum\limits_{i+2j\leq k}\sup\limits_{Q_T} r^{-\beta+i}|D^i_x D^j_tv|
$$
is finite. Moreover, we define
\begin{align*}
&\ \ [v]_{\widetilde{C}^{k+\alpha}_\beta(Q_T)}\\
&=\sum\limits_{i+2j= k} \sup\limits_{(x,t)\neq (y,t)\in Q_T}\min(r(x),r(y))^{-\beta+i+\alpha}\frac{|D^i_xD^j_tv(x,t)-D^i_xD^j_tv(y,t)|}{|x-y|^{\alpha}},
\end{align*}
and
\begin{align*}
&\ \ <v>_{\widetilde{C}^{k+\alpha}_\beta(Q_T)}\\
=&\sum\limits_{i+2j= k-1} \sup\limits_{(x,t)\neq (x,s)\in Q_T}r(x)^{-\beta+i}\frac{|D^i_xD^j_tv(x,t)-D^i_xD^j_tv(x,s)|}{|t-s|^{\frac{\alpha+1}{2}}}
\end{align*}
where $k\geq 1$.
Then the weighted H\"{o}lder space $\widetilde{C}^{k+\alpha,(k+\alpha)/2}_{\beta}(Q_T)$ is defined as the set of functions $v$ for which the norm
$$
||v||_{\widetilde{C}^{k+\alpha,(k+\alpha)/2}_\beta(Q_T)}=||v||_{\widetilde{C}^{k}_\beta(Q_T)}+[v]_{\widetilde{C}^{k+\alpha}_\beta(Q_T)}+<v>_{\widetilde{C}^{k+\alpha}_\beta(Q_T)}
$$
is finite.
\end{defn}
\begin{rem}\label{parabolic_wss_rem1}
(i)
 Consider the rescaled function
$v_R(x,t)=v(Rx,t)$.
Let $y=Rx$. Let $A_R$ and $\Delta_0$ be the notations defined in Remark \ref{parabolic_wss_rem}.
By a simple change of variables, we have
$$||D_x^j v_R||_{L^p_{\beta-j}(A_1\times [0,T])}=R^{\beta}||D_x^jv||_{L^p_{\beta-j}(A_R\times [0,T])},$$
$$||D_t v_R||_{L^p_{\beta}(A_1\times [0,T])}=R^{\beta}||D_tv||_{L^p_{\beta}(A_R\times [0,T])},$$
and
$$||v_R||_{\widetilde{C}^{k+\alpha,(k+\alpha)/2}_\beta(A_1\times [0,T])}=R^{\beta}||v||_{\widetilde{C}^{k+\alpha,(k+\alpha)/2}_\beta(A_R\times [0,T])}.$$

(ii) By Theorem \ref{Sobolev_embedding} (i) below, we have $||v||_{\widetilde{W}^{k,k/2,q}_\beta(Q_T)}\leq C||v||_{W^{k,k/2,q}_\beta(Q_T)}$.
\end{rem}

Then we also have following inequalities which related to the weighted spaces defined in Definition \ref{parabolic_wss} and Definition \ref{parabolic_wss_1}. The main idea for the proofs of these inequalities is the scaling arguments and covert local estimates to global estimates, which have been used by R.Bartnik \cite{B} to prove similar results for the weighted spaces for elliptic operators.
\begin{thm}\label{Sobolev_embedding}
Suppose $(M^n,g)$ is an $n$-dimensional asymptotically flat manifold with asymptotic coordinates $\{x^i\}$. Set $Q_T=M^n\times [0,T]$. Then the following inequalities hold:

(i) For $1\leq p\leq q\leq \infty$, $\beta_2<\beta_1$, we have
\begin{align}\label{holder_aaaaaaaaaaaaaa}
||v||_{L^p_{\beta_1}(Q_T)}\leq C||v||_{L^q_{\beta_2}(Q_T)}.
\end{align}

(ii) For $\beta=\beta_1+\beta_2$, $1\leq p,q,s\leq \infty$, $\frac{1}{p}=\frac{1}{q}+\frac{1}{s}$, we have
\begin{align}\label{holder_ineq}
||v||_{L^p_{\beta}(Q_T)}\leq ||v||_{L^q_{\beta_1}(Q_T)}|v||_{L^s_{\beta_2}(Q_T)},
\end{align}
and
\begin{align}\label{holder_ineq_2}
||v||_{C^{\alpha,\alpha/2}_{\beta}(Q_T)}\leq ||v||_{C^{\alpha,\alpha/2}_{\beta_1}(Q_T)}|v||_{C^{\alpha,\alpha/2}_{\beta_2}(Q_T)},
\end{align}
here $||v||_{C^{\alpha,\alpha/2}_{\beta}(Q_T)}=||v||_{C^{0}_\beta(Q_T)}+[v]_{C^{\alpha}_\beta(Q_T)}$ as we defined in Definition \ref{parabolic_wss}.

(iii)(Sobolev inequalities) For  $q\geq 1$, $n\geq 2$, we have
\begin{align}\label{Weighted_ss_4}
||D_xv||_{L^{\frac{(n+2)p}{n+2-p}}_{\beta-1}(Q_T)}\leq C ||v||_{\widetilde{W}^{2,1,q}_\beta(Q_T)}
\end{align}
if  $p<n+2$ and $p\leq q\leq \frac{(n+2)p}{n+2-p}$,
\begin{align}\label{Weighted_ss_2}
||v||_{L^{\frac{(n+2)p}{n+2-2p}}_{\beta}(Q_T)}\leq C ||v||_{\widetilde{W}^{2,1,q}_\beta(Q_T)}
\end{align}
if  $p<\frac{n+2}{2}$ and $p\leq q\leq \frac{(n+2)p}{n+2-2p}$,
and
\begin{align}\label{Weighted_ss_3}
||v||_{\widetilde{C}_{\beta}^{m,m/2}(Q_T)}\leq C ||v||_{\widetilde{W}^{2,1,p}_\beta(Q_T)}
\end{align}
if $p>n+2$, $m=2-\frac{n+2}{p}$.
\end{thm}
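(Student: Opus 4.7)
The three statements share a single architectural idea, due to Bartnik in the elliptic setting: reduce global weighted inequalities on the end of $M$ to classical unweighted parabolic inequalities on a fixed unit domain, via dyadic decomposition and scaling. My plan is to cover the asymptotic region $M_\infty$ by dyadic parabolic shells $A_{R_k}\times[0,T]$ with $R_k=2^k$, where $A_R=B(o,2R)\setminus B(o,R)$, complete the cover with a single bounded piece on $M_0$, and work one shell at a time using the rescaling $v_R(x,t)=v(Rx,\bar t)$ (with $\bar t=R^2 t$ for the spaces of Definition \ref{parabolic_wss} or $\bar t=t$ for those of Definition \ref{parabolic_wss_1}) so that $v_R$ is defined on the unit shell $A_1\times[0,T']$. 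Throughout, the crucial input is the scaling identities stated in Remark \ref{parabolic_wss_rem} and Remark \ref{parabolic_wss_rem1}, which show that all the quantities appearing on the two sides of each claimed inequality transform by the \emph{same} power $R^\beta$ (modulo a harmless $R^{-2/p}$ on the $L^p$ norms that is absorbed into the weight shift).

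Parts (i) and (ii) are the warm-up. For (i) I would apply classical Hölder's inequality on each shell $A_{R_k}$ with exponents $p\le q$, producing a factor $R_k^{n(1/p-1/q)}$ which, combined with the weight mismatch $\beta_1-\beta_2>0$, gives summable terms $R_k^{-(\beta_1-\beta_2)q}$ that sum geometrically. Part (ii) is essentially pointwise Hölder (for the $C^{\alpha,\alpha/2}$ product) or integrated Hölder (for the $L^p$ product); writing out the norms explicitly, the weights $r^{-\beta_1}$ and $r^{-\beta_2}$ multiply to $r^{-\beta}$, and the Hölder exponents match by $1/p=1/q+1/s$. For the Hölder-seminorm piece in $C^{\alpha,\alpha/2}$, I use the usual product rule $|fg(z)-fg(z')|\le |f(z)||g(z)-g(z')|+|g(z')||f(z)-f(z')|$ together with $\min(r(x),r(y))^{-\beta}=\min(r(x),r(y))^{-\beta_1}\min(r(x),r(y))^{-\beta_2}$.

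Part (iii) is the heart of the theorem. For each estimate I would first recall the classical parabolic Sobolev / Morrey embeddings on the fixed unit parabolic cylinder $A_1\times[0,T']$: for $1\le p<n+2$, $\widetilde W^{2,1,p}(A_1\times[0,T'])\hookrightarrow L^{(n+2)p/(n+2-p)}$ applied to $D_xv$; for $p<(n+2)/2$, $\widetilde W^{2,1,p}\hookrightarrow L^{(n+2)p/(n+2-2p)}$ applied to $v$; and for $p>n+2$, $\widetilde W^{2,1,p}\hookrightarrow \widetilde C^{2-(n+2)/p,(2-(n+2)/p)/2}$. Applying these to $v_{R_k}$ on $A_1\times[0,T]$ and then invoking the scaling identities of Remark \ref{parabolic_wss_rem1}(i), the $R_k^\beta$ factors on both sides cancel, yielding the shell-by-shell inequality $\|D_xv\|_{L^{p^*}_{\beta-1}(A_{R_k}\times[0,T])}\le C\|v\|_{\widetilde W^{2,1,q}_\beta(A_{R_k}\times[0,T])}$ (and analogues for the other two) with a \emph{uniform} constant $C$ independent of $k$. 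The inclusion $p\le q\le p^*$ lets me upgrade the right-hand side from $L^p$ to $L^q$ via part (i) applied on each shell. Summing the $\ell^{p^*}$-th powers over $k$ (or taking the supremum in the $L^\infty/\widetilde C$ cases) and handling the compact piece on $M_0$ by standard interior parabolic estimates produces the global inequality.

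The main obstacle I anticipate is purely bookkeeping: making sure each of the three assertions in (iii) is indexed by the correct unweighted parabolic embedding on $A_1$, and in particular verifying that the space $\widetilde W^{2,1,p}$ (with no time-weight shift) is the right unweighted scale of functions on the unit shell so that the quoted classical embeddings apply. Once the scale matching is checked, the dyadic summation is routine because it reduces to a geometric series dominated by the weight gap, exactly as in Bartnik's elliptic scheme.
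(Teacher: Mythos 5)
Your proposal is correct and follows essentially the same route as the paper: rescale each dyadic shell $A_{2^{j-1}R_0}$ to the unit shell by $v_R(x,t)=v(Rx,t)$, apply the classical parabolic Sobolev/Morrey embeddings there, invoke the scale invariance of Remark \ref{parabolic_wss_rem1}, and sum over shells (the paper disposes of (i)--(ii) by citing Bartnik's Theorem 1.2, which your H\"older-on-shells argument reproves). One correction to your closing remark: in (iii) the dyadic sum does not close by a geometric series from a weight gap --- both sides carry the same weight $\beta$ --- but by the elementary sequence inequality $\left(\sum_j a_j^{p^*}\right)^{1/p^*}\le\left(\sum_j a_j^{q}\right)^{1/q}$ valid for $p^*\ge q$, which is exactly where the hypothesis $q\le p^*$ enters (while $p\le q$ is what lets you pass from the $L^p$-based to the $L^q$-based norm on the bounded unit shell, by H\"older there rather than by part (i), whose global form needs a strict weight drop).
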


\begin{proof}
(i) and (ii) follow from Theorem 1.2 in \cite{B} directly.
Recall a Riemannian manifold $M^n$ with $C^{\infty}$ metric $g$ is called asymptotically flat of order $\tau>0$
if there exists a decomposition $M^n= M_0\cup M_{\infty}$
 with $M_0$ compact and a diffeomorphism $\phi:M_{\infty}\to\mathbb{R}^n-B(o,R_0)$ for some $R_0 > 0$ satisfying
(\ref{AE}). We denote $A_R=B(o,2R)\backslash B(o,R)$ be the annulus on $\mathbb{R}^n$ and $E_R=\mathbb{R}^n\backslash B(o,R)$.
Consider the rescaled function
\begin{align*}
    v_R(x,t)=v(Rx,t).
\end{align*}
Let $y=Rx$.
It follows from Remark \ref{parabolic_wss_rem1} (i) that
\begin{align*}
||v_R(x,t)||_{\widetilde{W}^{2,1,p}_\beta(A_1\times [0,T])}=R^{\beta}||v(y,t)||_{\widetilde{W}^{2,1,p}_\beta(A_R\times [0,T])},
\end{align*}
Set $p^*=\frac{(n+2)p}{n+2-p}$. Note that the weighted Sobolev spaces defined in Definition \ref{parabolic_wss} and Definition \ref{parabolic_wss_1} are equivalent to the usual Sobolev spaces on bounded domains.
Then
\begin{align*}
||D_xv||_{L^{p^*}_{\beta-1}(A_R\times [0,T])}
&= R^{-\beta}||D_xv_R||_{L^{p^*}_{\beta-1}(A_1\times [0,T])}\\
&\leq C R^{-\beta}||v_R(x,t)||_{\widetilde{W}^{2,1,q}_\beta(A_1\times [0,T])}\\
&= C ||v(x,t)||_{\widetilde{W}^{2,1,q}_\beta(A_R\times [0,T])},
\end{align*}
by using the usual Sobolev inequality to $A_1$ (\cite{LSN}, p.80, Lemma 3.3 or \cite{Ch}, p.29,
Theorem 2.3 ), where $C$ depends on $n$, $p$, $A_1$ and $T^{-1}$.
We write $v=\sum\limits_{j=0}^{\infty}v_j$ with $v_0=v|_{M_0}$ and $v_j=v|_{A_{2^{j-1}R_0 }}$ for $j\geq 1$. Then
\begin{align*}
||D_xv||_{L^{p^*}_{\beta-1}(Q_T)}&=(||D_xv_0||^{p^*}_{L^{p^*}_{\beta-1}(M_0\times[0,T])}+\sum\limits_{j=1}^{\infty}||D_xv_j||^{p^*}_{L^{p^*}_{\beta-1}(A_{2^{j-1}R_0 }\times [0,T])})^{\frac{1}{p^*}}\\
&\leq C(||v_0||_{\widetilde{W}^{2,1,q}_\beta(M_0\times[0,T])}^{p^*}+\sum\limits_{j=1}^{\infty}||v_j||_{\widetilde{W}^{2,1,q}_\beta(A_{2^{j-1}R_0 }\times [0,T])}^{p^*})^{\frac{1}{^{p^*}}}\\
&\leq  C(||v_0||_{\widetilde{W}^{2,1,q}_\beta(M_0\times[0,T])}^{q}+\sum\limits_{j=1}^{\infty}||v_j||_{\widetilde{W}^{2,1,q}_\beta(A_{2^{j-1}R_0 }\times [0,T])}^{q})^{\frac{1}{^{q}}},
\end{align*}
since $p^*\geq q$ and $(\sum a_j^{p^*})^{\frac{1}{p^*}}\leq (\sum a_j^q)^{\frac{1}{q}}$ for $a_j\geq 0$. Therefore, (\ref{Weighted_ss_4}) holds clearly.
Since we have
\begin{align}\label{scaling_property}
||v_R||_{\widetilde{C}^{k+\alpha,(k+\alpha)/2}_\beta(A_1\times [0,T])}=R^{\beta}||v||_{\widetilde{C}^{k+\alpha,(k+\alpha)/2}_\beta(A_R\times [0,T])},
\end{align}
 (\ref{Weighted_ss_2}) and (\ref{Weighted_ss_3}) follow from same rescaling arguments and usual Sobolev inequalities (\cite{LSN}, p.80, Lemma 3.3 or \cite{Ch}, p.29, Theorem 2.3 and p.38, Theorem 3.4).
\end{proof}

\section{weighted estimates for Yamabe flow}\label{section_Y_w}

In this section, we give the proof of Theorem \ref{A}. Suppose $u(x,t)$, $0\leq t<t_{max}$, be the fine solution to the Yamabe flow (\ref{yamabe_flow_u}) on an $n$-dimensional asymptotically flat manifold $(M^n,g_0)$. Let $Q_T=M\times [0,T]$, $T<t_{max}$.  The key part is to prove $1-u(x,t)\in C^{2+\alpha,1+\alpha/2}_{-\tau}(Q_T)$ (this implies $g_{ij}(x,t)-\delta_{ij}\in C^{2+\alpha}_{-\tau}(M)$ for $0\leq t<t_{max}$). Using the scaling arguments used in section \ref{section_5}, we solve this problem by the global $L^p$ and Schauder estimates to Yamabe flow for weighted spaces defined in section \ref{section_5}.

First, the following theorem
shows that $1-u(x,t)$ belongs to weighted space $C^{0}_{-(\tau+2)}(Q_T)$ for any $T<t_{max}$.

\begin{thm}\label{u_decay}
Let $u(x,t)$, $0\leq t<t_{max}$, be the fine solution to the Yamabe flow (\ref{yamabe_flow_u}) on an $n$-dimensional asymptotically flat manifold $(M^n,g_0)$ of order $\tau>0$. Set $v=1-u$. Then
\begin{align}\label{L0_decay}
v(x,t)=O(r^{-(\tau+2)}).
\end{align}
for all $t\in [0,t_{max})$ and $v(x,t)\in C^{0}_{-(\tau+2)}(Q_T)$ for any $T<t_{max}$.
\end{thm}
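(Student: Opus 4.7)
The plan is to reduce the problem to a parabolic comparison argument with an explicit time-linear barrier. First I would derive the equation satisfied by $v = 1-u$: substituting $u = 1-v$ into (\ref{yamabe_flow_u}) gives, with the zero initial datum $v(\cdot,0)=0$,
\begin{align*}
    N u^{N-1}\partial_t v - \Delta_{g_0} v + a R_{g_0}\, v = a R_{g_0}.
\end{align*}
Because $g_0$ is asymptotically flat of order $\tau$, one has $|R_{g_0}| \leq C\, r^{-(\tau+2)}$ (from $g_{ij}-\delta_{ij}\in C^{2+\alpha}_{-\tau}$ and the formula for the scalar curvature), and because $u$ is a fine solution, $Nu^{N-1}$ is pinched between positive constants and $|\nabla_{g_0} u|$ is uniformly bounded on $Q_T$.

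Next I would construct a barrier of the form $\phi(x,t) = A t\, h(x)$, where $h \in C^{\infty}(M)$ is positive, bounded below by some $\epsilon>0$, and equal to $r(x)^{-(\tau+2)}$ outside a compact set. A direct calculation using $g_0^{ij} = \delta^{ij} + O(r^{-\tau})$ and the asymptotic Christoffel bound $\Gamma^k_{ij} = O(r^{-(\tau+1)})$ yields $\Delta_{g_0} h = O(r^{-(\tau+4)})$. Writing $L_0$ for the operator appearing on the left of the equation for $v$,
\begin{align*}
    L_0\phi = A Nu^{N-1}h - At\,\Delta_{g_0}h + At\, aR_{g_0}\,h.
\end{align*}
For $r$ large the first term $\geq cA\, r^{-(\tau+2)}$ dominates, since the other two are $O(r^{-(\tau+4)})$ and $O(r^{-(2\tau+4)})$; on a compact core the first term is bounded below by $cA\epsilon$ while the remaining contributions are uniformly bounded on $Q_T$. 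Choosing $A = A(T)$ sufficiently large yields $L_0\phi \geq |aR_{g_0}|$ throughout $Q_T$.

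Finally I would invoke the maximum principle of Theorem \ref{Ecker_Huisken} applied to $w_{\pm} := \pm v - \phi$. Each satisfies $w_{\pm}(\cdot,0)=0$, and by linearity of $L_0$ together with $L_0 v = aR_{g_0}$ and $L_0\phi \geq |aR_{g_0}|$, we have $L_0 w_{\pm} \leq 0$. To match the divergence form of Theorem \ref{Ecker_Huisken}, I would divide through by $Nu^{N-1}$, set $\alpha := 1/(Nu^{N-1})$, and use
\begin{align*}
    \alpha\,\Delta_{g_0} w = \operatorname{div}(\alpha\nabla w) - \langle\nabla\alpha,\nabla w\rangle
\end{align*}
to rewrite the inequality as $\partial_t w_{\pm} - \operatorname{div}(\alpha \nabla w_{\pm}) \leq b\cdot\nabla w_{\pm} + c\, w_{\pm}$, where $\alpha$ is pinched between positive constants and $b,c$ are bounded by the fine-solution hypotheses. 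Conditions (iii) and (iv) of Theorem \ref{Ecker_Huisken} hold automatically because $|\nabla_{g_0}v|$ is bounded on $Q_T$ and $g_0$ is time-independent. One then concludes $w_{\pm} \leq 0$, so that $|v| \leq \phi \leq AT\,h \leq C_T\, r^{-(\tau+2)}$ on $Q_T$, which gives the asserted pointwise bound and immediately yields $v \in C^0_{-(\tau+2)}(Q_T)$.

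The main obstacle is establishing the supersolution inequality $L_0\phi\geq |aR_{g_0}|$ uniformly on both the compact core and the asymptotic end; in particular the asymptotic estimate $\Delta_{g_0} h = O(r^{-(\tau+4)})$ (rather than merely $O(r^{-(\tau+2)})$) is crucial, since it is what allows the term $ANu^{N-1}h \sim A\, r^{-(\tau+2)}$ to dominate the source $aR_{g_0} = O(r^{-(\tau+2)})$ for $A$ sufficiently large, so that the zero initial datum can propagate the $R_{g_0}$ decay to $v$ without loss.
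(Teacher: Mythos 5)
Your reduction is correct as far as it goes: the equation $Nu^{N-1}\partial_t v-\Delta_{g_0}v+aR_{g_0}v=aR_{g_0}$ with $v(\cdot,0)=0$, the decay $R_{g_0}=O(r^{-(\tau+2)})$ and $\Delta_{g_0}h=O(r^{-(\tau+4)})$ on the end, and the divergence-form rewriting needed to invoke Theorem \ref{Ecker_Huisken} are all fine, and your overall scheme (a comparison argument closed by the Ecker--Huisken maximum principle) is the same one the paper uses. The gap is in the barrier $\phi=At\,h(x)$ itself: the claim that ``choosing $A=A(T)$ sufficiently large yields $L_0\phi\geq|aR_{g_0}|$ throughout $Q_T$'' fails once $T$ is large, because the error terms in
\begin{align*}
L_0\phi=ANu^{N-1}h-At\,\Delta_{g_0}h+At\,aR_{g_0}h
\end{align*}
are themselves proportional to $A$, exactly like the good term. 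Dividing by $A$, the inequality you need is $Nu^{N-1}h-t\left(\Delta_{g_0}h-aR_{g_0}h\right)\geq|aR_{g_0}|/A$, and at any point where $\Delta_{g_0}h-aR_{g_0}h>0$ the left-hand side becomes negative once $t$ exceeds an explicit threshold determined by $g_0$, $h$ and $\delta$, after which no choice of $A$ helps. Such points exist in general: $R_{g_0}$ has no sign under the hypotheses of this theorem, $h$ is an arbitrary positive extension on the core, and even on the end the flat model gives $\Delta_0 r^{-(\tau+2)}=(\tau+2)(\tau+4-n)r^{-(\tau+4)}>0$ whenever $\tau>n-4$ (so always for $n=3,4$). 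Hence your argument proves the statement only on a short interval $[0,T_0]$, not for every $T<t_{max}$; the obstacle you single out (the improved decay of $\Delta_{g_0}h$) is not the one that bites. (A minor slip: as written, $h$ cannot both equal $r^{-(\tau+2)}$ outside a compact set and be bounded below by $\epsilon>0$ globally; you clearly intend the lower bound only on the compact core.)

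The missing idea is that the comparison function must grow exponentially, not linearly, in time, so that its time derivative can absorb the zeroth-order terms on an arbitrary interval. For instance $\phi=B\left(e^{\Lambda t}-1\right)h(x)$ works for every $T$: since $|\Delta_{g_0}h|/h$ and $|aR_{g_0}|$ are globally bounded (the former is $O(r^{-2})$ on the end and controlled by $\min h>0$ on the core), choosing $\Lambda$ large depending only on these bounds and on $\delta$ gives $L_0\phi\geq Bc\,h\geq |aR_{g_0}|$ on all of $Q_T$ for a suitable fixed $B$, and the rest of your argument goes through verbatim; alternatively, one can iterate your short-time argument in steps of length $T_0$. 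This is precisely how the paper escapes the same difficulty, by a dual route: instead of a decaying barrier it conjugates by the growing weight $h=r^{\tau+2}$, so that $w=hv$ satisfies a uniformly parabolic inequality with bounded coefficients $b,d$ and bounded source $a h R_{g_0}/(N(1-v)^{N-1})$, and then subtracts the spatially constant comparison function $e^{(D+DCT)t}+Ct$, whose exponential factor dominates the bounded zeroth-order coefficient $d$ on any $[0,T]$ before applying Theorem \ref{Ecker_Huisken}.
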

\begin{proof}
Setting $v=1-u$, we have
\begin{align}\label{v_equation}
    N(1-v)^{N-1}v_t=\Delta_{g_0}v-aR_{g_0}v+aR_{g_0}.
\end{align}
 Set $h(x)=r(x)^{\tau+2}$ and $w=h(x)v(x,t)$, where $r(x)$ is the function defined in Definition \ref{elliptic_wss}. Then, by direct computation, we have
\begin{align*}
  N(1-v)^{N-1}w_t=&\Delta_{g_0} w-2\nabla_{g_0}\log h\cdot\nabla_{g_0}w\\
-&w(\frac{\Delta_{g_0} h}{h}-2\frac{|\nabla_{g_0} h|^2}{h^2}+aR_{g_0})+ahR_{g_0}.
\end{align*}
Hence,
\begin{align*}
 w_t&=div_{g_0}(\frac{1}{ N(1-v)^{N-1}}\nabla_{g_0}w) +b\cdot\nabla_{g_0}w\\
&\ \ \ +dw+\frac{a}{ N(1-v)^{N-1}}hR_{g_0},
\end{align*}
where $b(w,t)=-(\nabla_{g_0}(\frac{1}{ N(1-v)^{N-1}})+2\nabla_{g_0}\log h)$ and $d(x,t)=-\frac{1}{ N(1-v)^{N-1}}(\frac{\Delta_{g_0} h}{h}-2\frac{|\nabla_{g_0} h|^2}{h^2}+aR_{g_0})$.
Note that $R_{g_0}
=\partial_j(\partial_i g_{ij}- \partial_j g_{ii})+O(r^{-2\tau-2})=O(r^{-(\tau+2)})$. Therefore,
$|\frac{a}{ N(1-v)^{N-1}}hR_{g_0}|\leq C$ and $ |d|\leq D$ for $t\in[0,T]$ by the definitions of $h$ and fine solution.
Set $\widetilde{w}=w-e^{(D+DCT)t}-Ct$. Then we have
\begin{align*}
 \widetilde{w}_t&\leq div_{g_0}(\frac{1}{ N(1-v)^{N-1}}\nabla_{g_0}\widetilde{w}) +b\cdot\nabla_{g_0}\widetilde{w}\\
&\ \ \ \ +dw-De^{(D+DCT)t}-DCTe^{(D+DCT)t}\\
&\leq div_{g_0}(\frac{1}{ N(1-v)^{N-1}}\nabla_{g_0}\widetilde{w}) +b\cdot\nabla_{g_0}\widetilde{w}+d\widetilde{w}.
\end{align*}
Then (\ref{L0_decay}) follows from Theorem \ref{Ecker_Huisken} immediately.
\end{proof}

Second, we give the global $L^p$  and Schauder estimates to the Yamabe flow for the weighted spaces defined in section \ref{section_5}.
\begin{thm}\label{Lp_estimate_II}
Let $u(x,t)$, $0\leq t<t_{max}$, be the fine solution to the Yamabe flow (\ref{yamabe_flow_u}) on an $n$-dimensional asymptotically flat manifold $(M^n,g_0)$ of order $\tau>0$. Assume $0<\delta\leq u(x,t)\leq C'$ on $[0,T]$, $T<t_{max}$.
Set $v=1-u$. Then there is a constant $C=C(n,p,\tau,\beta,\delta, C')$ such that
\begin{align}\label{Lp_estimate_II_1}
||v||_{W^{2,1,p}_\beta (Q_T)}\leq C(||R_{g_0}||_{L^p_{\beta-2}(Q_T)}+||v||_{L^p_{\beta}(Q_T)}).
\end{align}
\end{thm}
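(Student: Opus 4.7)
The starting point is the linearized equation for $v=1-u$ already derived in the proof of Theorem \ref{u_decay}, namely
$$
N(1-v)^{N-1}v_t=\Delta_{g_0}v-aR_{g_0}v+aR_{g_0},\qquad v(\cdot,0)=0.
$$
By the fine-solution hypothesis, $\delta\le u\le C'$, so $N(1-v)^{N-1}=Nu^{N-1}$ is bounded between positive constants depending only on $\delta,C'$; the coefficient $aR_{g_0}$ is bounded, and the forcing term $aR_{g_0}$ decays like $O(r^{-\tau-2})$. Thus $v$ solves a uniformly parabolic linear equation with bounded coefficients, vanishing initial data, and inhomogeneity $aR_{g_0}$. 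This reduces Theorem \ref{Lp_estimate_II} to a standard-flavoured parabolic $L^p$ estimate, but in weighted spaces.

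My plan is to mimic Bartnik's scaling strategy from the elliptic setting (Section 1 of \cite{B}) in the parabolic setting, using the parabolic scaling law from Remark \ref{parabolic_wss_rem}. I decompose $M=M_{0}\cup\bigcup_{j\ge 0}A_{R_j}$ with $R_j=2^{j-1}R_0$ and $A_{R}=B(o,2R)\setminus B(o,R)$, together with slightly enlarged annuli $A_R^{*}=B(o,3R)\setminus B(o,R/2)$. On $A_{R_j}^{*}\times[0,T]$ I apply the parabolic rescaling $v_{R}(y,s):=v(R_{j}y,R_{j}^{2}s)$, which maps this cylinder to $A_{1}^{*}\times[0,T/R_{j}^{2}]$. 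The function $v_R$ satisfies an equation of exactly the same form with the rescaled metric $g_{0}^{R}(y)$ (pulled back under $y\mapsto R_{j}y$), whose asymptotic-flatness hypothesis $g_{ij}-\delta_{ij}\in C^{2+\alpha}_{-\tau}$ gives $g_{0}^{R}\to\delta$ in $C^{2+\alpha}(A_{1}^{*})$, and with rescaled scalar curvature $R_{g_{0}^{R}}(y)=R_{j}^{2}R_{g_{0}}(R_{j}y)=O(R_{j}^{-\tau})\to 0$. Hence, as $j$ varies, the family $\{v_{R}\}$ solves a uniformly parabolic family on $A_{1}^{*}\times[0,T/R_{j}^{2}]$ with uniformly bounded coefficients, vanishing initial data, and inhomogeneity $aR_{g_{0}^{R}}$.

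The standard interior parabolic $L^{p}$ estimate (Ladyzhenskaya--Solonnikov--Ural'tseva, cf.\ \cite{L}) applied on $A_{1}$ with a cut-off supported in $A_{1}^{*}$ and vanishing initial data then yields
$$
\|v_{R}\|_{W^{2,1,p}(A_{1}\times[0,T/R_{j}^{2}])}\le C\bigl(\|R_{g_{0}^{R}}\|_{L^{p}(A_{1}^{*}\times[0,T/R_{j}^{2}])}+\|v_{R}\|_{L^{p}(A_{1}^{*}\times[0,T/R_{j}^{2}])}\bigr),
$$
where $C=C(n,p,\delta,C')$ is uniform in $R_j$ and, crucially, uniform as the time-interval length shrinks (this is the main technical point, and is secured by the vanishing initial data together with parabolic Calderón--Zygmund theory). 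Since weights are comparable to $1$ on $A_{1}^{*}$, this is equivalent to the weighted estimate. Unwinding the scaling with Remark \ref{parabolic_wss_rem} — which converts $\|\cdot\|_{L^{p}_{\beta-i-2j}(A_{1}\times[0,T/R_{j}^{2}])}$ exactly into $R_{j}^{\beta-2/p}\|\cdot\|_{L^{p}_{\beta-i-2j}(A_{R_j}\times[0,T])}$ — gives the annular estimate
$$
\|v\|_{W^{2,1,p}_{\beta}(A_{R_j}\times[0,T])}\le C\bigl(\|R_{g_{0}}\|_{L^{p}_{\beta-2}(A_{R_j}^{*}\times[0,T])}+\|v\|_{L^{p}_{\beta}(A_{R_j}^{*}\times[0,T])}\bigr).
$$
Raising to the $p$th power, summing over $j\ge 0$ (the enlarged annuli $A_{R_j}^{*}$ have bounded overlap), and adjoining the standard parabolic $L^{p}$ estimate on the bounded cylinder $M_{0}\times[0,T]$ yields (\ref{Lp_estimate_II_1}).

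The principal obstacles I expect are: (i) uniformity of the local parabolic $L^{p}$ constant as the rescaled time interval $T/R_{j}^{2}$ tends to zero with $j\to\infty$, which I handle by exploiting $v_{R}(\cdot,0)\equiv 0$; (ii) verifying that the rescaled metrics $g_{0}^{R}$ form a uniformly controlled family of $C^{2+\alpha}$-perturbations of the Euclidean metric, which follows directly from $g_{ij}-\delta_{ij}\in C^{2+\alpha}_{-\tau}$; and (iii) absorbing the lower-order terms produced by the cut-off into the $\|v\|_{L^{p}_{\beta}}$ on the right-hand side — a routine step once the scaling has homogenised the annuli.
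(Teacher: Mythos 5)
Your overall strategy --- parabolic Bartnik-type scaling on dyadic annuli, interior $L^p$ estimates made uniform in the shrinking time interval via the vanishing initial data, then summation --- is essentially the paper's, and your bounded-overlap enlarged annuli $A_{R_j}^{*}$ are in fact a cleaner implementation than the paper's sharp restrictions $v_j=v|_{A_{2^{j-1}R}}$ (which are not themselves $W^{2,1,p}$ functions; the paper compensates with the cut-offs $\zeta_R$ only afterwards). The one organizational difference is where the non-flatness of the operator gets absorbed: you absorb it locally, invoking a variable-coefficient interior estimate on each rescaled cylinder, while the paper proves the estimate for the flat heat operator $\partial_t-\Delta_0$ on the end, sums, and then absorbs the difference $(\Delta_0-P)v$ globally in the weighted norms, via Theorem \ref{Sobolev_embedding} and the smallness (\ref{Lp_estimate_II_3}) of the coefficient norms on $E_R\times[0,T]$ as $R\to\infty$.

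There is, however, a genuine gap at precisely your key uniformity claim. You state that the rescaled family is ``uniformly parabolic with uniformly bounded coefficients'' and conclude an interior $L^p$ estimate with constant uniform in $R_j$. For $p\neq 2$, boundedness of the leading coefficients is not enough for parabolic Calder\'on--Zygmund estimates; one needs uniformly small oscillation (or a uniform VMO-type modulus) of the leading coefficients. Your obstacle (ii) secures this for the metric part, $g_0^R\to\delta$ in $C^{2+\alpha}(A_1^{*})$, but says nothing about the coefficient $u_R^{N-1}(y,s)=u^{N-1}(R_jy,R_j^2s)$ multiplying $\partial_s$ (equivalently, after dividing through, the leading matrix is $h\,g_0^{ij}$ with $h=u^{-(N-1)}$, which must be uniformly close to $\delta^{ij}$). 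The hypotheses you actually cite --- $\delta\le u\le C'$ and the fine-solution gradient bound $|\nabla_{g_0}u|\le C$ --- cannot give this: under the rescaling the modulus of continuity of $u_R$ degrades by the factor $R_j$ in space and $R_j^2$ in time, so no uniform oscillation control survives. What rescues the argument is Theorem \ref{u_decay}: $|u-1|=|v|=O(r^{-(\tau+2)})$ uniformly in $t$, hence $\sup_{A^{*}_{R_j}\times[0,T]}|u_R^{N-1}-1|=O(R_j^{-(\tau+2)})\to 0$, so the full leading coefficient is uniformly close to the identity and the frozen-coefficient perturbation yields the claimed uniform constant. This is exactly the role played in the paper's proof by $\sup\limits_{|x|>R}|hg_0^{ij}-\delta_{ij}|\to 0$ in (\ref{Lp_estimate_II_3}), which likewise requires $h\to 1$, i.e.\ the decay of $v$, and not merely the asymptotic flatness of $g_0$. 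Your proof must invoke this decay explicitly; as written, the uniformity in $j$ on which the final summation rests is unjustified.
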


\begin{proof}
Recall $v=1-u$ evolves as
\begin{align}\label{equation_111}
    N(1-v)^{N-1}v_t=\Delta_{g_0}v-aR_{g_0}v+aR_{g_0}
\end{align}
along the Yamabe flow (\ref{yamabe_flow_u}).
We change time by a constant scale
such that (\ref{equation_111}) is equivalent to the following equation
\begin{align}
    (1-v)^{N-1}v_t=\Delta_{g_0}v-aR_{g_0}v+aR_{g_0}.
\end{align}
Let $A_R=B(o,2R)\backslash B(o,R)$ in $\mathbb{R}^n$ and $E_R=\mathbb{R}^n\backslash B(o,R)$. Denote $\Delta_0$ be the stand Laplacian  with flat metric. Here we consider the rescaled function
\begin{align*}
    v_R(x,t)=v(Rx,R^2t).
\end{align*}
Let $y=Rx$, $\bar{t}=R^2 t$.
It follows from Remark \ref{parabolic_wss_rem} that
\begin{align*}
||v_R||_{W^{2,1,p}_\beta(A_1\times [0,T])}=R^{\beta-\frac{2}{p}}||v||_{W^{2,1,p}_\beta(A_R\times [0,R^2T])},
\end{align*}
and
\begin{align*}
||(\frac{\partial}{\partial t}-\Delta_{0}) v_R||_{L^p_{\beta-2}(A_1\times [0,T])}=R^{\beta-\frac{2}{p}}||(\frac{\partial}{\partial \bar{t}}-\Delta_{0})v||_{L^p_{\beta-2}(A_R\times [0,R^2T])}.
\end{align*}
We write $v=\sum\limits_{j=0}^{\infty}v_j$ with $v_0=v|_{M_0}$ and  $v_j=v|_{A_{2^{j-1}R  }}$ for $j\geq 1$, where $R\geq R_0$ and $R_0$ is the constant defined in Definition \ref{AE_def}. Note that $v_j$ vanishes outside $A_{2^{j-1}R}$.
By the standard $L^p$ estimates for parabolic equation,
\begin{align*}
&\qquad||v_j||_{W^{2,1,p}_\beta(A_{2^{j-1}R  }\times [0,T])}
= (2^{j-1}R  )^{-(\beta-\frac{2}{p})}||(v_j)_{2^{j-1}R  }||_{W^{2,1,p}_\beta(A_1\times [0,(2^{j-1}R  )^{-2}T])}\\
&\leq C (2^{j-1}R  )^{-(\beta-\frac{2}{p})}(||(\partial_t-\Delta_{0})(v_j)_{2^{j-1}R  }||_{L^p_{\beta-2}(A_1\times [0,(2^{j-1}R )^{-2}T])}\\
&\qquad +||(v_j)_{2^{j-1}R  }||_{L^p_{\beta}(A_1\times [0,(2^{j-1}R  )^{-2}T])})\\
&=C (||(\partial_t-\Delta_{0})v_j||_{L^p_{\beta-2}(A_{2^{j-1}R  }\times [0,T])}+||v_j||_{L^p_{\beta}(A_{2^{j-1}R  }\times [0,T])}),
\end{align*}
where $C$ is independent of $j$ and $T$ (see Proposition 7.11 in \cite{L}). Therefore,
\begin{align}\label{Lp_estimate_f_1}
&||v||_{W^{2,1,p}_\beta(E_{R}\times[0,T])}=(\sum\limits_{j=1}^{\infty}||v_j||^p_{W^{2,1,p}_\beta(A_{2^{j-1}R   }\times [0,T])})^{\frac{1}{p}}\nonumber\\
\leq &C (\sum\limits_{j=1}^{\infty}(||(\partial_t-\Delta_{0})v_j||_{L^p_{\beta-2}(A_{2^{j-1}R}\times [0,T])}+||v_j||_{L^p_{\beta}(A_{2^{j-1}R   }\times [0,T])})^p)^{\frac{1}{p}}\nonumber\\
\leq &C(||(\partial_t-\Delta_{0})v||_{L^p_{\beta-2}(E_{R   }\times[0,T])}+||v||_{L^p_{\beta}(E_{R   }\times[0,T])}).
\end{align}
Set $Pv=h(\Delta_{g_0}v-aR_{g_0}v)=h(g^{ij}_0\frac{\partial^2 v}{\partial x^i \partial x^j}+b^j\frac{\partial v}{\partial x^j}-aR_{g_0}v)$, where $h=\frac{1}{ (1-v)^{N-1}}$ and
$b^j=\frac{1}{\sqrt{\text{det} g_0}}\frac{\partial}{\partial x_i}(\sqrt{\text{det} g_0}g^{ij}_0)$.
If $supp(v)\subset E_R$, then
\begin{align*}
&\quad||(\Delta_{0}-P)v||_{L^p_{\beta-2}(Q_T)}\\
&\leq \sup\limits_{|x|>R}|hg^{ij}_0-\delta_{ij}|||D_x^2v||_{L^p_{\beta-2}(Q_T)}+||hb^j\frac{\partial v}{\partial x^j}||_{L^p_{\beta-2}(Q_T)}\\
&\ \ \ +||a hR_{g_0}v||_{L^p_{\beta-2}(Q_T)}\\
&\leq \sup\limits_{|x|>R}|hg^{ij}_0-\delta_{ij}|||D_x^2v||_{L^p_{\beta-2}(Q_T)}+C||b||_{L^{q}_{-1}(E_R\times [0,T])}||D_xv||_{L^{\frac{pq}{q-p}}_{\beta-1}(Q_T)}\\
&\ \ \ +C||R_{g_0}||_{L^{\frac{q}{2}}_{-2}(E_R\times [0,T])}||v||_{L^{\frac{pq}{q-2p}}_{\beta}(Q_T)}\\
&\leq \sup\limits_{|x|>R}|hg^{ij}_0-\delta_{ij}|||D_x^2v||_{L^p_{\beta-2}(Q_T)}+C||b||_{L^{q}_{-1}(E_R\times [0,T])}||v||_{W^{2,1,p}_{\beta}(Q_T)}\\
&\ \ \ +C||R_{g_0}||_{L^{\frac{q}{2}}_{-2}(E_R\times [0,T])}||v||_{W^{2,1,p}_{\beta}(Q_T)}\\
&\leq (\sup\limits_{|x|>R}|hg^{ij}_0-\delta_{ij}|+C||b||_{L^{q}_{-1}(E_R\times [0,T])}+C||R_{g_0}||_{L^{\frac{q}{2}}_{-2}(E_R\times [0,T])})||v||_{W^{2,1,p}_{\beta}(Q_T)},
\end{align*}
by using Theorem \ref{Sobolev_embedding}.
Note that
\begin{align}\label{Lp_estimate_II_3}
 \sup\limits_{|x|>R}|hg^{ij}_0-\delta_{ij}|+C||b||_{L^{q}_{-1}(E_R\times [0,T])}+C||R_{g_0}||_{L^{\frac{q}{2}}_{-2}(E_R\times [0,T])} \to 0,
\end{align}
as $R\to\infty$, by the asymptotic condition (\ref{AE}) and Theorem \ref{Sobolev_embedding} (i).
Using (\ref{Lp_estimate_f_1}), (\ref{Lp_estimate_II_3}), we obtain
\begin{align}\label{Lp_estimate_f_5}
||v_{\infty}||_{W^{2,1,p}_\beta (Q_T)}
\leq C(||(\partial_t-\Delta_{g_0})v_{\infty}||_{L^p_{\beta-2}(Q_T)}+||v_{\infty}||_{L^p_{\beta}(Q_T)})
\end{align}
for $R$ sufficient large. Then
\begin{align}\label{Lp_estimate_f_6}
&||(\partial_t-P)v_{\infty}||_{L^p_{\beta-2}(Q_T)}\nonumber\\
\leq &  ||(\partial_t-P)v||_{L^p_{\beta-2}(Q_T)}+||(\partial_t-P)(\zeta_R v)||_{L^p_{\beta-2}(Q_T)}\nonumber\\
\leq & 2 ||(\partial_t-P)v||_{L^p_{\beta-2}(Q_T)}+||hv\Delta_{g_0}\zeta_R+2h\nabla_{g_0}u\cdot\nabla_{g_0}\zeta_R||_{L^p_{\beta-2}(A_R\times[0,T])}\nonumber\\
\leq & 2 ||(\partial_t-P)v||_{L^p_{\beta-2}(Q_T)}+C||v+|\nabla_{g_0}u|||_{L^p(A_R\times[0,T])}.
\end{align}
Similar to (\ref{Lp_estimate_f_6}),
\begin{align}\label{Lp_estimate_f_7}
&||(\partial_t-P)v_{0}||_{L^p_{\beta-2}(Q_T)}\nonumber\\
\leq &  ||(\partial_t-P)v||_{L^p_{\beta-2}(Q_T)}+C||v+|\nabla_{g_0}u|||_{L^p(A_R\times[0,T])}.
\end{align}
By using the interpolation inequality, standard parabolic $L^p$ estimate on $v_0$, (\ref{Lp_estimate_f_5}), (\ref{Lp_estimate_f_6}) and (\ref{Lp_estimate_f_7}), we conclude that
\begin{align*}
||v||_{W^{2,1,p}_\beta (Q_T)}\leq C(||(\partial_t-P)v||_{L^p_{\beta-2}(Q_T)}+||v||_{L^p_{\beta}(Q_T)}).
\end{align*}
Then Theorem \ref{Lp_estimate_II}
holds immediately.
\end{proof}

\begin{thm}\label{Holder_estimate_II_thm}
Let $u(x,t)$, $0\leq t<t_{max}$, be the fine solution to the Yamabe flow (\ref{yamabe_flow_u}) on an $n$-dimensional asymptotically flat manifold $(M^n,g_0)$ of order $\tau>0$. Set $v=1-u$. Assume $0<\delta\leq u(x,t)\leq C'$ on $[0,T]$, $||v||_{C^{\alpha,\alpha/2}_0(Q_T)}\leq C''$ and $R_{g_0}\in C^{\alpha}_{-2-\tau}(M)$, where $T<t_{max}$.
 Then there is a constant $C=C(n,p,\tau,\beta,\delta, C',C'')$ such that
\begin{align}\label{Holder_estimate_II}
||v||_{C^{2+\alpha,1+\alpha/2}_\beta (Q_T)}\leq C(||R_{g_0}||_{C^{\alpha,\alpha/2}_{\beta-2} (Q_T)}+||v||_{C^0_{\beta}(Q_T)}).
\end{align}
\end{thm}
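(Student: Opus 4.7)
The plan is to mirror the proof of Theorem \ref{Lp_estimate_II} step for step, replacing the classical parabolic $L^p$ estimate by the classical parabolic Schauder estimate and using the weighted Hölder product inequality (\ref{holder_ineq_2}) in place of the Hölder inequality (\ref{holder_ineq}). After the same constant time rescaling used for Theorem \ref{Lp_estimate_II}, $v=1-u$ satisfies
$$v_t = h(v)\bigl(\Delta_{g_0}v-aR_{g_0}v+aR_{g_0}\bigr),\qquad h(v):=(1-v)^{-(N-1)}.$$
Since $0<\delta\leq 1-v\leq C'$, the function $h$ is smooth in $v$ with uniformly bounded derivatives, and the hypothesis $||v||_{C^{\alpha,\alpha/2}_0(Q_T)}\leq C''$ then gives a uniform $C^{\alpha,\alpha/2}_0$ bound on the coefficient $h(v)$ which is independent of $T$.

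The core of the argument is a dyadic scaling on $M_\infty$. Fix $R\geq R_0$, let $A_{R_j}=B(o,2R_j)\setminus B(o,R_j)$ with $R_j=2^{j-1}R$, and define the rescaled function $v_{(j)}(y,s)=v(R_jy,R_j^{2}s)$ on $A_1\times[0,R_j^{-2}T]$. By Remark \ref{parabolic_wss_rem},
$$||v_{(j)}||_{C^{2+\alpha,1+\alpha/2}_\beta(A_1\times[0,R_j^{-2}T])}=R_j^{\beta}||v||_{C^{2+\alpha,1+\alpha/2}_\beta(A_{R_j}\times[0,T])},$$
with the analogous identities for the $C^{\alpha,\alpha/2}_{\beta-2}$ norm of $(\partial_t-\Delta_0)v$ and the $C^{0}_\beta$ norm of $v$. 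The classical interior parabolic Schauder estimate on the fixed unit annulus $A_1$ gives
$$||v_{(j)}||_{C^{2+\alpha,1+\alpha/2}(A_1'\times[0,R_j^{-2}T])}\leq C\bigl(||(\partial_t-\Delta_0)v_{(j)}||_{C^{\alpha,\alpha/2}(A_1\times[0,R_j^{-2}T])}+||v_{(j)}||_{C^{0}(A_1\times[0,R_j^{-2}T])}\bigr),$$
with $C$ independent of $j$ and $T$. Undoing the scaling on each annulus and taking the supremum over $j$, exactly as in the passage to (\ref{Lp_estimate_f_1}), yields the frozen-coefficient estimate
$$||v||_{C^{2+\alpha,1+\alpha/2}_\beta(E_R\times[0,T])}\leq C\bigl(||(\partial_t-\Delta_0)v||_{C^{\alpha,\alpha/2}_{\beta-2}(E_R\times[0,T])}+||v||_{C^{0}_\beta(E_R\times[0,T])}\bigr).$$

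To convert $\partial_t-\Delta_0$ into $\partial_t-P$ with $Pv:=h(v)(\Delta_{g_0}v-aR_{g_0}v)$, expand $(\Delta_0-P)v$ into pieces involving $(h(v)g^{ij}_0-\delta^{ij})$, $h(v)b^j$, and $ah(v)R_{g_0}$, where $b^j$ is as in the proof of Theorem \ref{Lp_estimate_II}. Applying (\ref{holder_ineq_2}) with the decay $g_{ij}-\delta_{ij}\in C^{2+\alpha}_{-\tau}$, the hypothesis $R_{g_0}\in C^{\alpha}_{-2-\tau}$, and the uniform $C^{\alpha,\alpha/2}_0$ bound on $h(v)$, yields the Hölder analogue of (\ref{Lp_estimate_II_3}):
$$||(\Delta_0-P)v||_{C^{\alpha,\alpha/2}_{\beta-2}(E_R\times[0,T])}\leq \epsilon(R)\,||v||_{C^{2+\alpha,1+\alpha/2}_\beta(Q_T)},\qquad \epsilon(R)\to 0\text{ as }R\to\infty.$$
Choosing $R$ large enough absorbs this perturbation into the left-hand side. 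Patching with the classical interior Schauder estimate on a neighborhood of the compact set $M_0$ via a smooth cutoff $\zeta_R$, paralleling (\ref{Lp_estimate_f_6})--(\ref{Lp_estimate_f_7}), and applying an interpolation inequality between $C^{0}_\beta$ and $C^{2+\alpha,1+\alpha/2}_\beta$ to kill the lower-order $v$-term, finally produces (\ref{Holder_estimate_II}) with the source $ah(v)R_{g_0}$ contributing the $||R_{g_0}||_{C^{\alpha,\alpha/2}_{\beta-2}}$ term.

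The main obstacle is the bookkeeping of the nonlinear coefficient $h(v)$ in the Hölder category: the hypothesis $||v||_{C^{\alpha,\alpha/2}_0(Q_T)}\leq C''$ is imposed precisely so that $h(v)$ has a $C^{\alpha,\alpha/2}_0$ norm which does \emph{not} feed back into the right-hand side of (\ref{Holder_estimate_II}). Once this is in hand, the truly delicate step is verifying $\epsilon(R)\to 0$: the product rule (\ref{holder_ineq_2}) must distribute the weights so that the decay exponent $-\tau$ of $g-\delta$ dominates the weight $\beta$ dragged along by the scaling factor $R_j^\beta$ on each annulus, which is the step most sensitive to the choice of weights, exactly as in the corresponding step of Theorem \ref{Lp_estimate_II}.
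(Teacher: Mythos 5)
Your proposal is correct and follows essentially the same route as the paper's own proof: a scaling argument on dyadic annuli combined with the standard parabolic Schauder estimate to get the frozen-coefficient bound on $E_R\times[0,T]$, then the weighted H\"{o}lder product inequality (\ref{holder_ineq_2}) together with the decay of $g_0-\delta$, the hypothesis $R_{g_0}\in C^{\alpha}_{-2-\tau}(M)$, and the uniform $C^{\alpha,\alpha/2}_0$ bound on $h$ to make the perturbation $(\Delta_0-P)v$ absorbable for large $R$, followed by the same cutoff/interpolation patching as in Theorem \ref{Lp_estimate_II}. The only difference is that you spell out the sup-over-annuli bookkeeping that the paper compresses into ``using the similar scaling arguments,'' so no further comparison is needed.
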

\begin{proof}
We use the same notations as the proof of Theorem \ref{Lp_estimate_II}.
 We have
$
||v_R||_{C^{k+\alpha,(k+\alpha)/2}_\beta(A_1\times [0,T])}=R^{\beta}||v||_{C^{k+\alpha,(k+\alpha)/2}_\beta(A_R\times [0,R^2T])}
$
by Remark \ref{parabolic_wss_rem}.
Using the similar scaling arguments to Theorem \ref{Lp_estimate_II} and the standard Schauder estimate for parabolic equations (see Theorem 4.9 in \cite{L}), we can obtain
\begin{align}\label{Holder_estimate_f_1}
&||v||_{C^{2+\alpha,1+\alpha/2}_\beta(E_{R}\times[0,T])}
\leq &C(||(\partial_t-\Delta_{0})v||_{C^{\alpha,\alpha/2}_{\beta-2}(E_{R   }\times[0,T])}+||v||_{C^0_{\beta}(E_{R   }\times[0,T])})
\end{align}
for $R\geq R_0$, where $R_0$ is the constant defined in Definition \ref{AE_def}.
 Since $0<\delta\leq u(x,t)\leq C'$ on $[0,T]$ and $||v||_{C^{\alpha,\alpha/2}_0(Q_T)}\leq C''$, $||h||_{C^{\alpha,\alpha/2}_{0}(Q_T)}\leq C'''$, where $C'''$ is a constant only depending on $\delta,N,C',C''$.
If $supp(v)\subset E_R$, we compute
\begin{align}\label{Holder_estimate_f_2}
&\quad||(\Delta_{0}-P)v||_{C^{\alpha,\alpha/2}_{\beta-2}(Q_T)}\nonumber\\
&\leq  ||hg^{ij}_0-\delta_{ij}||_{C^{\alpha,\alpha/2}_{0}(E_R\times [0,T])}||D_x^2v||_{C^{\alpha,\alpha/2}_{\beta-2}(Q_T)}\nonumber\\
&\quad+||h||_{C^{\alpha,\alpha/2}_{0}(E_R\times [0,T])}||b||_{C^{\alpha,\alpha/2}_{-1}(E_R\times [0,T])}||D_xv||_{C^{\alpha,\alpha/2}_{\beta-1}(Q_T)}\nonumber\\
&\quad+||h||_{C^{\alpha,\alpha/2}_{0}(E_R\times [0,T])}||R_{g_0}||_{C^{\alpha,\alpha/2}_{-2}(E_R\times [0,T])}||v||_{C^{\alpha,\alpha/2}_{\beta}(Q_T)}\nonumber\\
&\leq  C(||hg^{ij}_0-\delta_{ij}||_{C^{\alpha,\alpha/2}_{0}(E_R\times [0,T])}
+||b||_{C^{\alpha,\alpha/2}_{-1}(E_R\times [0,T])}\nonumber\\
&\quad+||R_{g_0}||_{C^{\alpha,\alpha/2}_{-2}(E_R\times [0,T])})||v||_{C^{2+\alpha,1+\alpha/2}_{\beta}(Q_T)}
\end{align}
by using Theorem \ref{Sobolev_embedding}.
Note that
\begin{align}\label{Holder_estimate_f_3}
||hg^{ij}_0-\delta_{ij}||_{C^{\alpha,\alpha/2}_{0}(E_R\times [0,T])}
+||b||_{C^{\alpha,\alpha/2}_{-1}(E_R\times [0,T])}+||R_{g_0}||_{C^{\alpha,\alpha/2}_{-2}(E_R\times [0,T])} \to 0,
\end{align}
as $R\to\infty$ by the asymptotic condition (\ref{AE}), $R_{g_0}\in C^{\alpha}_{-2-\tau}(M)$, and Theorem \ref{Sobolev_embedding}.
So by using the similar arguments of Theorem \ref{Sobolev_embedding}, we see that Theorem \ref{Holder_estimate_II_thm} holds.
\end{proof}

Finally, we give the proof of Theorem \ref{A}.

 \textbf{Proof of Theorem \ref{A}.}
 By Theorem \ref{u_decay}, we have
$v \in L^{\infty}_{-\tau-2}(Q_T)$. Then $R_{g_0}\in L^{p}_{-\tau-2+\epsilon}(Q_T)$ and $v\in L^{p}_{-\tau+\epsilon}(Q_T)$ for any $1<p<\infty$ and $\epsilon>0$ by (\ref{holder_aaaaaaaaaaaaaa}). It follows from Theorem \ref{Lp_estimate_II} that $v\in W_{-\tau+\epsilon}^{2,1,p}$ for any $1<p<\infty$ and $\epsilon>0$.
So $v\in \widetilde{C}^{1+\alpha,(1+\alpha)/2}_{-\tau+\epsilon}(Q_T)$ by (\ref{Weighted_ss_3}) and Remark \ref{parabolic_wss_rem1} (ii).
 Then
\begin{align*}
&\qquad||v||_{C^{\alpha,\alpha/2}_0(Q_T)}\\
&=||v||_{C^{\alpha}_0(Q_T)}+\sup\limits_{(x,t)\neq (y,s)\in Q_T}\min(r(x),r(y))^{\alpha}\frac{|v(x,t)-v(y,s)|}{\delta((x,t),(y,s))^{\alpha}}\\
&\leq C(||v||_{C^{\alpha}_0(Q_T)}+\sup\limits_{(x,t)\neq (y,s)\in Q_T}\min(r(x),r(y))^{\alpha}\frac{|v(x,t)-v(y,t)|}{|x-y|^{\alpha}}\\
&\qquad +\sup\limits_{(x,t)\neq (y,s)\in Q_T}\min(r(x),r(y))^{\alpha}\frac{|v(y,t)-v(y,s)|}{|t-s|^{\frac{\alpha}{2}}})\\
&\leq C||v||_{\widetilde{C}^{1+\alpha,(1+\alpha)/2}_{-\tau+\epsilon}(Q_T)}
\end{align*}
for $\epsilon>0$ small. Since $R_{g_0}\in C^{\alpha}_{-2-\tau}(M)$ and $R_{g_0}$ is independent of time, we have $R_{g_0}\in C^{\alpha,\alpha/2}_{-2-\tau} (Q_T)$.
By Theorem \ref{u_decay}, we know that
$v \in C^{0}_{-2-\tau}(Q_T)$. It follows from Theorem \ref{Holder_estimate_II_thm} that $v\in C^{2+\alpha,1+\alpha/2}_{-\tau}(Q_T)$.
It implies $1-u(x,t) \in C^{2+\alpha}_{-\tau}(M)$ for $0\leq t<t_{max}$ by the Definition \ref{parabolic_wss}.
$\Box$

\section{Appendix}
In this section, we give a proof of Theorem \ref{Ecker_Huisken}
 for sake of convenience for the readers.

 \textbf{Proof of Theorem \ref{Ecker_Huisken}:}
Define $\theta>0$ to be chosen
$$
h(y,t)=-\frac{\theta d^2_{g(t)}(p,y)}{4(2\eta-t)},0<t<\eta,
$$
where $d_{g(t)}(p,y)$ is the distance between $p$ and $y$ at time $t$ and
$0<\eta<\min(T,\frac{1}{64K},\frac{1}{32\alpha_4},\frac{1}{4\alpha_5})$. Then
\begin{align*}
\frac{d}{dt}h=-\frac{\theta d^2_{g(t)}(p,y)}{4(2\eta-t)^2}-\frac{\theta d_{g(t)}(p,y)}{2(2\eta-t)}\frac{d}{dt}d_{g(t)}(p,y).
\end{align*}
By (iv), we have
\begin{align*}
|\frac{d}{dt}d_{g(t)}(p,y)|\leq \frac{1}{2}\alpha_5 d_{g(t)}(p,y).
\end{align*}
Then we conclude that
\begin{align*}
    \frac{d}{dt}h\leq -\theta^{-1}|\nabla h|^2+\theta^{-1}\alpha_5|\nabla h|^2 (2\eta-t),
\end{align*}
We choose $\theta=\frac{1}{4\alpha_1}$,
then
\begin{align}\label{h_estiamte}
\frac{d}{dt}h+2a|\nabla h|^2\leq 0
\end{align}
by using $\eta\leq \frac{1}{4\alpha_5}$.
Taking $f_K=\max\{\min(f,K),0\}$ and $0<\epsilon<\eta$, we have
\begin{align*}
&\int^{\eta}_{\epsilon}e^{-\beta t}(\int_{M}\phi^2 e^h f_K(div(a\nabla f)-\frac{\partial f}{\partial t})d\mu_t)dt\\
\geq & -\alpha_2 \int^{\eta}_{\epsilon}e^{-\beta t}(\int_{M}\phi^2 e^h f_K|\nabla f|d\mu_t)dt\\
&-\alpha_3 \int^{\eta}_{\epsilon}e^{-\beta t}(\int_{M}\phi^2 e^h f_K fd\mu_t)dt
\end{align*}
for some smooth time independent compactly supported function $\phi$ on $M^n$, where $\beta>0$ will be chosen later. Then we have
\begin{align*}
0\leq &-\int^{\eta}_{\epsilon}e^{-\beta t}(\int_{M}\phi^2 e^h a <\nabla f_K,\nabla f_K>d\mu_t)dt\\
&-\int^{\eta}_{\epsilon}e^{-\beta t}(\int_{M}\phi^2 e^h f_K a<\nabla h,\nabla f>d\mu_t)dt\\
&-2\int^{\eta}_{\epsilon}e^{-\beta t}(\int_{M}\phi e^h f_K a<\nabla \phi,\nabla f>d\mu_t)dt\\
&-\int^{\eta}_{\epsilon}e^{-\beta t}(\int_{M}\phi^2 e^h f_K\frac{\partial f}{\partial t}d\mu_t)dt
+\alpha_3 \int^{\eta}_{\epsilon}e^{-\beta t}(\int_{M}\phi^2 e^h f_K fd\mu_t)dt \\
&+\alpha_2 \int^{\eta}_{\epsilon}e^{-\beta t}(\int_{M}\phi^2 e^h f_K|\nabla f|d\mu_t)dt\\
=&\textrm{I}+\textrm{II}+\textrm{III}+\textrm{IV}+\textrm{V}+\textrm{VI}.
\end{align*}
By Schwartz' inequality, we derive
\begin{align*}
\textrm{II}\leq \frac{1}{4}\int^{\eta}_{\epsilon}e^{-\beta t}(\int_{M}\phi^2 e^h a|\nabla f|^2d\mu_t)dt+\int^{\eta}_{\epsilon}e^{-\beta t}(\int_{M}\phi^2 e^h f_K^2 a|\nabla h|^2d\mu_t)dt,
\end{align*}
\begin{align*}
\textrm{III}\leq \frac{1}{2}\int^{\eta}_{\epsilon}e^{-\beta t}(\int_{M}\phi^2 e^h a|\nabla f|^2d\mu_t)dt+2\int^{\eta}_{\epsilon}e^{-\beta t}(\int_{M} e^h f_K^2 a|\nabla \phi|^2d\mu_t)dt,
\end{align*}
and
\begin{align*}
\textrm{VI}&\leq \frac{1}{4}\int^{\eta}_{\epsilon}e^{-\beta t}(\int_{M}\phi^2 e^h a|\nabla f|^2d\mu_t)dt+\alpha_2^2\int^{\eta}_{\epsilon}e^{-\beta t}(\int_{M} e^h f_K^2 \frac{1}{a}|\nabla \phi|^2d\mu_t)dt\\
&\leq \frac{1}{4}\int^{\eta}_{\epsilon}e^{-\beta t}(\int_{M}\phi^2 e^h a|\nabla f|^2d\mu_t)dt+\frac{\alpha_2^2}{\alpha_1'}\int^{\eta}_{\epsilon}e^{-\beta t}(\int_{M} e^h f_K^2 |\nabla \phi|^2d\mu_t)dt.
\end{align*}
Since
\begin{align*}
-e^hf_K\frac{\partial f}{\partial t}\leq -e^h f_K \frac{\partial f_K}{\partial t}+\frac{\partial }{\partial t}(e^hf_K(f_K-f)),
\end{align*}
and
$$f_K(f_K-f)\leq 0,$$
we obtain
\begin{align*}
&\qquad\textrm{IV}+\textrm{V}\\
&\leq -\frac{1}{2}\int^{\eta}_{\epsilon}e^{-\beta t}(\int_{M}\phi^2 e^h \frac{\partial f_K^2}{\partial t}d\mu_t)dt
+\int^{\eta}_{\epsilon}e^{-\beta t}(\int_{M}\phi^2 \frac{\partial }{\partial t}(e^h f_K(f_K-f))d\mu_t)dt\\
&
-\alpha_3 \int^{\eta}_{\epsilon}e^{-\beta t}(\int_{M}\phi^2 e^h f_K(f_K-f)d\mu_t)dt+\alpha_3 \int^{\eta}_{\epsilon}e^{-\beta t}(\int_{M}\phi^2 e^h f_K^2d\mu_t)dt.
\end{align*}
Moreover, we have
\begin{align*}
|\frac{d}{dt}(d\mu_t)|\leq n \alpha_5 d\mu_t
\end{align*}
by (iv). Now we choose $\beta\geq 2n\alpha_5+4\alpha_3+4\frac{\alpha_2^2}{\alpha_1'}$. Then
\begin{align*}
&\qquad\textrm{IV}+\textrm{V}\\
&\leq -\frac{1}{2}e^{-\beta t}\int_{M}\phi^2 e^h f_K^2d\mu_t|_{t=\eta}
+\frac{1}{2}e^{-\beta t}\int_{M}\phi^2 e^h f_K^2d\mu_t|_{t=\epsilon}\\
&+\frac{1}{2}\int^{\eta}_{\epsilon}e^{-\beta t}(\int_{M}\phi^2 e^h f_K^2 \frac{\partial h}{\partial t}d\mu_t)dt-\frac{1}{4}\beta\int^{\eta}_{\epsilon}e^{-\beta t}(\int_{M}\phi^2 e^h f_K^2 d\mu_t)dt\\
&
+e^{-\beta t}\int_{M}\phi^2 e^h f_K(f_K-f)d\mu_t|_{t=\eta}-e^{-\beta t}\int_{M}\phi^2 e^h f_K^2d\mu_t|_{t=\epsilon}.
\end{align*}
Combining the estimates of $\textrm{I}-\textrm{VI}$ and letting $\epsilon\to 0$, we get
\begin{align*}
&-\int^{\eta}_{0}e^{-\beta t}(\int_{M}\phi^2 e^h a |\nabla f_K|^2d\mu_t)dt
+\int^{\eta}_{0}e^{-\beta t}(\int_{M}\phi^2 e^h a |\nabla f|^2d\mu_t)dt\\
&+2\int^{\eta}_{0}e^{-\beta t}(\int_{M} e^h f_K^2 a|\nabla \phi|^2d\mu_t)dt-\frac{1}{2}e^{-\beta t}\int_{M}\phi^2 e^h f_K^2d\mu_t|_{t=\eta}\geq 0.
\end{align*}
by $f_K\equiv 0$ at $t=0$ and (\ref{h_estiamte}). Now we choose $0\leq \phi\leq 1$ satisfying $\phi \equiv 1$ on $B_{g_0}(p,R)$, $\phi \equiv 0$ outside $B_{g_0}(p,R+1)$ and $|\nabla_{g_0} \phi|_{g_0}\leq 2$. Then we have
\begin{align*}
&\frac{1}{2}e^{-\beta \eta}\int_{B_{g_0}(p,R)}\phi^2 e^h f_K^2d\mu_t|_{t=\eta}\leq \int^{\eta}_{0}e^{-\beta t}(\int_{B_{g_0}(p,R+1)}\phi^2 e^h a (|\nabla f|^2-|\nabla f_K|^2)d\mu_t)dt\\
&+C(\alpha_5)\int^{\eta}_{0}e^{-\beta t}(\int_{B_{g_0}(p,R+1)\backslash B_{g_0}(p,R)} e^h f_K^2 a d\mu_t)dt,
\end{align*}
where $C(\alpha_5)$ is a constant only depending on $\alpha_5$. By $0<\eta<\min(\frac{1}{K},\frac{1}{32\alpha_4})$ and volume growth assumptions
on $M^n$, we have
\begin{align*}
\int^{\eta}_{0}e^{-\beta t}(\int_{B_{g_0}(p,R+1)\backslash B_{g_0}(p,R)} e^h f_K^2 a d\mu_t)dt\to 0,
\end{align*}
as $R\to \infty$. Then we derive
\begin{align*}
&\frac{1}{2}e^{-\beta \eta}\int_{M}\phi^2 e^h f_K^2d\mu_t|_{t=\eta}\leq \int^{\eta}_{0}e^{-\beta t}(\int_{M}\phi^2 e^h a (|\nabla f|^2-|\nabla f_K|^2)d\mu_t)dt.
\end{align*}
Now letting $K\to\infty$, we conclude that
\begin{align*}
\frac{1}{2}e^{-\beta \eta}\int_{M}\phi^2 e^h (\max(f,0))^2d\mu_t|_{t=\eta}\leq 0,
\end{align*}
where $0<\eta<\min(T,\frac{1}{64K},\frac{1}{32\alpha_4},\frac{1}{4\alpha_5})$. By the inductive argument, we conclude that
$f\leq 0$ in $M^n\times[0,T]$.
$\Box$

\thanks{\textbf{Acknowledgement}: We would like thank  Prof. Li Ma bring
this problem to us. The first author would like thank Doctor Y.F.Chen for useful talking about the paper \cite{B}. }

\end{document}